\definecolor{Gray}{gray}{0.9}
\setlist{noitemsep,leftmargin=*}
\newcommand{\R}[0]{\mathbb R}
\newcommand{\M}[0]{\mathcal M}
\newcommand{\eqdef}[0]{:=}
\DeclareMathOperator*{\argmin}{arg\min}
\newtheorem*{remark}{Remark}
\begin{document}

\title{A Framework for Solving Parabolic Partial Differential Equations on Discrete Domains}

\author{Leticia Mattos Da Silva}
\orcid{0000-0001-7288-3015}
\affiliation{%
  \institution{Massachusetts Institute of Technology}
  \streetaddress{32 Vassar St}
  \city{Cambridge}
  \state{Massachusetts}
  \country{USA}
  \postcode{02139}
}
\email{leticiam@mit.edu}

\author{Oded Stein}
\affiliation{%
  \institution{University of Southern California}
  \streetaddress{941 Bloom Walk}
  \city{Los Angeles}
  \postcode{90089}
  \country{USA}}
\email{ostein@usc.edu}

\author{Justin Solomon}
\affiliation{%
  \institution{Massachusetts Institute of Technology}
  \streetaddress{32 Vassar St}
  \city{Cambridge}
  \country{USA}
  \postcode{02139}
}
\email{jsolomon@mit.edu}

\renewcommand{\shortauthors}{Mattos Da Silva et al.}

\begin{abstract}
  We introduce a framework for solving a class of parabolic partial differential equations on triangle mesh surfaces, including the Hamilton-Jacobi equation and the Fokker-Planck equation. PDE in this class often have nonlinear or stiff terms that cannot be resolved with standard methods on curved triangle meshes. To address this challenge, we leverage a splitting integrator combined with a convex optimization step to solve these PDE. Our machinery can be used to compute entropic approximation of optimal transport distances on geometric domains, overcoming the numerical limitations of the state-of-the-art method. In addition, we demonstrate the versatility of our method on a number of linear and nonlinear PDE that appear in diffusion and front propagation tasks in geometry processing.
\end{abstract}

\begin{CCSXML}
<ccs2012>
   <concept>
       <concept_id>10002950.10003714.10003727.10003729</concept_id>
       <concept_desc>Mathematics of computing~Partial differential equations</concept_desc>
       <concept_significance>500</concept_significance>
       </concept>
   <concept>
       <concept_id>10010147.10010371.10010396.10010402</concept_id>
       <concept_desc>Computing methodologies~Shape analysis</concept_desc>
       <concept_significance>500</concept_significance>
       </concept>
 </ccs2012>
\end{CCSXML}

\ccsdesc[500]{Mathematics of computing~Partial differential equations}
\ccsdesc[500]{Computing methodologies~Shape analysis}
\keywords{diffusion, optimal transportation, Hamilton-Jacobi, Fokker-Planck}
\begin{teaserfigure}
  \includegraphics[width=\textwidth]{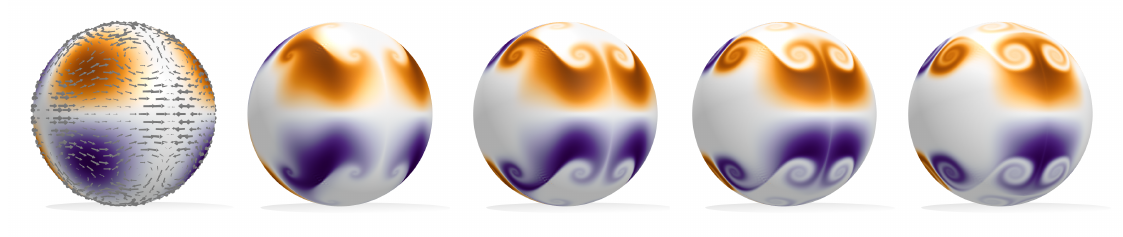}
  \caption{We show the time evolution of the Fokker-Planck equation under a certain flow on the discrete sphere with $81{,}920$ triangle faces and $40{,}962$ vertices, obtained using our framework. Our method can be used for any equation in a class of second-order parabolic PDE on triangle mesh surfaces.}
  \label{fig:teaser}
\end{teaserfigure}


\maketitle

\section{Introduction}
The analysis of partial differential equations (PDE) is a ubiquitous technique in computer graphics, geometry processing, and adjacent fields. In particular, parabolic PDE describe a wide variety of phenomena. For example, instances of the Hamilton-Jacobi equation model the time evolution of front propagation and the evolution of functions undergoing nonlinear diffusion. As another example, the Fokker-Planck equation describes the evolution of density functions driven by stochastic processes. Each of these equations has a long history as a means to study various problems in computer graphics, including modeling flames and fire \cite{nguyen2002fire}, stochastic heat kernel estimation \cite{armstrong2017stochastic}, medial axis detection \cite{du2004axis}, and texture synthesis \cite{witkin1991textures}. Hence, methods to solve this class of PDE over geometric domains are central in geometry processing.

Myriad numerical algorithms have been proposed for solving PDE in geometry processing. Unfortunately, the most popular algorithms are unsuitable for important regimes, such as capturing infinitesimal or nonlinear phenomena. An interesting example involves the convolutional Wasserstein distance method for barycenter computation \cite{solomon2015convolutional}. The aforementioned method is built on tiny amounts of diffusion, but relies on heuristics for choosing diffusion times. If the diffusion time step is too small, then the method leads to numerical inaccuracies, and if the step is too large, then the method results in approximations that quantitatively appear wrong. Another example involves nonlinear PDE used to describe certain types of flows. In this case, the challenge is that explicit integrators used for these PDE require time step restrictions to avoid numerical instability, and implicit integration schemes are not equivalent to solving a single linear system of equations, turning out to be too expensive.

To address these challenges, we propose a framework leveraging a splitting integration strategy and an appropriate spatial discretization to solve parabolic PDE over discrete geometric domains. The splitting allows us to leverage the implicit integration of a well-known PDE, the heat equation, and use a convex relaxation to deal with the challenging piece of the parabolic PDE. Empirically, our method overcomes limitations presented by state-of-the-art methods in geometry processing. We apply our framework to difficult parabolic PDE: log-domain heat diffusion, the nonlinear $G$-equation, and the Fokker-Planck equation, all of which we solve efficiently on a variety of domains and time steps.


\paragraph*{Contributions.} Our main contributions are: 
\begin{itemize}
    \item A numerical framework to solve linear and nonlinear parabolic PDE on curved triangle meshes with efficiency matching that of conventional methods in geometry processing.
    \item A log-domain diffusion algorithm that overcomes known limitations of geometry processing methods relying on tiny amounts of diffusion,  demonstrated on optimal transport tasks.
    \item An application of our framework to  numerical integration of the $G$-equation, which can be used as a component in graphics pipelines for the simulation of fire and flames.
\end{itemize}

\section{Related Work}

\subsection{PDE-driven geometry processing} 

PDE are a fundamental component of many geometry processing algorithms. PDE-based approaches have been used for surface fairing \cite{desbrun1999fairing,bobenko2005willmore}, surface reconstruction \cite{duan2004reconstruction,xu2006modelling}, and physics-based simulation \cite{chen2013modeling,obrien2002fracture}, to cite a few examples. Hence,  there has been a considerable amount of work focused on developing accurate and robust PDE solvers. The most popular PDE-based methods, however, such as the ``heat method" for geodesic distance approximations \cite{crane2013geodesics}, are concerned with solving linear PDE problems. 

\subsection{Second-order Parabolic PDE}

Second-order parabolic PDE comprise a general extension of the heat equation. The solutions to PDE in this class are in some ways related to the solutions of the heat equation; these equations appear in similiar applications where the object of interest is the density of a quantity on a domain, evolving forward in time. While the heat equation is a well-studied PDE with a wide array of tools available to solve it on discrete domains, second-order parabolic PDE in general are more challenging, in particular in the presence of additional terms that add nonlinearity or chaotic behaviour.

The typical strategy for spatial discretization of second-order parabolic PDE are Garlekin methods \cite{evans2010pde}. Common numerical integration techniques are finite difference schemes, including forward Euler, explicit Runge-Kutta formulas, backward Euler, and the Crank-Nicolson method. Still, many PDE in this family cannot be efficiently solved on triangle meshes with these discretization methods. Some of the difficulties that arise include the following: the strong stiffness of certain equations in this family is not suitable for methods such as Runge-Kutta \cite{sommeijer1998explicit}; and equations such as the Hamilton-Jacobi do not have a conservative form, making it difficult to write out fluxes for discontinuous Garlekin methods adapted to irregular domains \cite{yan2011garlekin}.

\subsubsection{Heat diffusion and entropy-regularized optimal transport} The entropy-regularized approximation of transport distances provides a method for solving various optimal transportation problems by realizing them as optimization problems in the space of probability measures equipped with the Kullback-Leibler (KL) divergence. We refer the reader to \cite{villani2003transportation} for a complete introduction to the optimal transportation problem, which roughly seeks to transport all the mass from a source distribution to a target distribution with minimal cost.  \cite{cuturi2013sinkhorn,benamou2014iterative} link entropy-regularized transport to minimizing a KL divergence.

\citet{cuturi2013sinkhorn} proposed a fast computational method to compute transport distances using entropic regularization based on the Sinkhorn algorithm. \citet{solomon2015convolutional} extended this work by showing that the kernel in the Sinkhorn algorithm for the 2-Wasserstein distance can be approximated with the heat kernel, which can be computed using PDE solution techniques over geometric domains. \citet{huguet2023geodesic} presents a similar attempt at improving Sinkhorn-based methods on discrete manifolds. This perspective is also echoed in Schr\"odinger bridge formulations of transport, whose static form includes an identical substitution of the heat kernel \cite{leonard2014dynamical}.

We leverage this approximation with the heat kernel over geometric domains, and show in \S\ref{subsec:barycenter}--\ref{subsec:measure-interp} that the numerical challenges arising for small entropy coefficients can be overcome by using our framework to solve a certain second-order parabolic nonlinear PDE.
\begin{figure}[t]
  \centering
  \includegraphics[width=\linewidth]{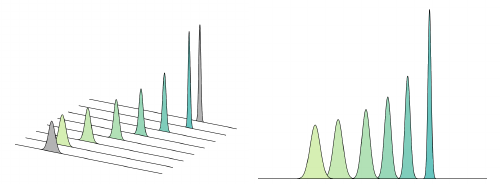}
  \caption{Wasserstein barycenters (green) between two distributions on the unit line (grey) with $1{,}000$ elements for varying pairs of weights. Our method obtains these barycenters for a tiny amount of entropy, i.e., $\gamma=10^{-7}$, whereas the method in  \citet{solomon2015convolutional} fails. \vspace{-0.2in}}
  \label{fig:line-barycenter}
\end{figure}

\subsubsection{$G$-equation}\label{subsubsec:Gequation} The $G$-equation is a level-set Hamilton-Jacobi equation introduced by F.A.\ Williams in \cite{buckmaster1985combustion} to model turbulent-flame propagation in combustion theory. While some finite difference schemes have succeeded in solving various cases of the $G$-equation on regular grids \cite{liu2013turbulent,gu2021incompressible}. Several challenges arise in the curved triangle mesh setting: for certain values of flow intensity, the constraint on time step---given by the Courant–Friedrichs–Lewy (CFL) condition---is quite restrictive on explicit methods; and the nonlinearity of the equation yields an open problem for implicit methods. 

While the $G$-equation is well-known in computational fluid dynamics (CFD) \cite{nielsen2022combustion}, it was brought to the computer graphics community as the ``thin-flame model" to simulate flame and fire in \cite{nguyen2002fire}. This work was later extended to model a wider variety of combustion phenomena in graphics, e.g., see \cite{hong2007wrinkled}. The time integration implemented in these graphics pipelines are based on the foundational scheme introduced by \citet{osher1988curvature}, which deals with the aforementioned limitations, in particular, time step restrictions due to a CFL condition.

As we will show in \S\ref{subsec:numerical-fire}, our framework achieves better numerical stability than this standard scheme for regimes in violation of its CFL condition, and matches numerical results for regimes within its CFL condition, where the standard scheme provides a reasonable approximation of the exact solution.

\subsubsection{Fokker--Planck equation}\label{subsubsec:FPequation}  The Fokker-Planck equation is a linear parabolic partial differential equation describing the time evolution of the probability density function of a process driven by a stochastic differential equation (SDE). Processes driven by SDE and the Fokker-Planck equation appear in applications in computer vision and image processing \cite{smolka1997images, leatham2022xray}. In the context of computer graphics and geometry processing, the Fokker-Planck equation and its variants have been used for texture synthesis based on nonlinear interactions \cite{witkin1991textures} and stochastic kernel estimation \cite{armstrong2017stochastic}.

We show that our framework can be used to solve the Fokker-Planck equation directly on curved triangle meshes, in contrast to using its SDE formulation. In this way, our work lays the foundation for a new approach to using stochastic differential equations in traditional geometry processing.

\begin{figure}[t]
  \centering
  \includegraphics[width=\linewidth]{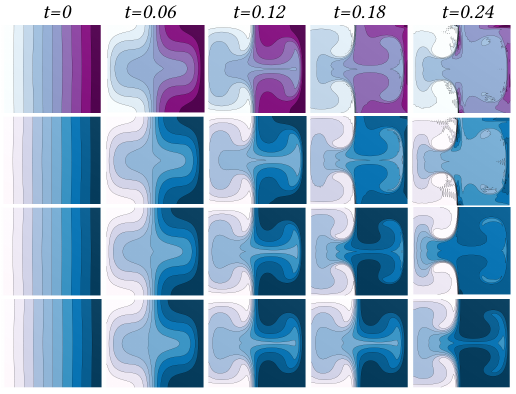}
  \caption{Level-sets of the $G$-equation under cellular flow obtained via our method with varying values of viscosity: $\varepsilon=0, 10^{-4},10^{-3},$ and $10^{-2}$ (from top to bottom, respectively). See Figure \ref{fig:types-flow} for an illustration of cellular flow. \vspace{-0.6cm}}
  \label{fig:levelsets}
\end{figure}

\subsection{Convex Optimization for Regularized Geodesics}

Perhaps the closest related work to our paper is ~\cite{edelstein2023convex}, which proposes a convex optimization technique for extracting regularized geodesic distances on triangle meshes. Their paper generalizes the PDE-based results in \cite{belyaev2020distance, belyaev2015variational} from estimation of distance-like functions to general regularizers under some mild conditions. This optimization approach is based on the equivalence found between the solution of an eikonal type equation, i.e., a \textit{stationary} first-order PDE dependent only on the gradient of a variable function, and the solution to a volume maximization problem. The equivalence is a consequence of mathematical results on the study of maximal viscosity subsolutions of eikonal type equations first introduced by \cite{ishii1987hamiltonjacobi}. In our case, we handle the problem of more general \textit{evolutionary} Hamilton-Jacobi equations with Hamiltonians depending on the variable function rather than just its gradient.

\section{Mathematical Preliminaries}

\subsection{General Preliminaries}  \label{subsec:preliminaries}

Let $\M \subset\R^3$ be a  {compact} surface embedded in $\R^3$, possibly with boundary $\partial \M$; we will use $\hat n(x)$ to denote the unit normal to the boundary $\partial\M$ at $x\in\partial\M$.  Take $\nabla$ to be the gradient operator and $\Delta$ to be the Laplacian operator, with the convention that $\Delta$ is negative semidefinite.  We refer the reader to \cite{jost2011laplace} for a general introduction to Laplacian operators, which roughly map functions to their total second derivative pointwise.

Let $q$ denote $\nabla u(x)$. In this paper we consider \textit{evolutionary}, that is, time-varying, second-order parabolic PDE of the following form:
\begin{equation}\label{eq:pdeform}
    \frac{\partial u}{\partial t} + H(x,q,u) = \varepsilon \Delta u,
\end{equation}
where $\varepsilon\geq0$ and $u(x,t)\colon\M \times [0,\infty)\to\R$ is an unknown variable function. If $\varepsilon>0$, then $u$ is guaranteed to be twice differentiable on $\M$. The function $H(x,q,u)\colon T^*\M \times \R\to\R$ is the Hamiltonian on $T^*\M$ of 
\begin{equation}\label{eq:HJ}
    \frac{\partial u}{\partial t} + H(x,q,u) = 0,
\end{equation}
which is known as the Hamilton-Jacobi equation \cite{evans2010pde}. The parameter $\varepsilon$ in \eqref{eq:pdeform} is called the viscosity parameter and, as $\varepsilon\to0$, the function $u$ converges to a solution of \eqref{eq:HJ} \cite{fleming1989games}. 

We tackle the Cauchy problem for \eqref{eq:pdeform}, that is, with a prescribed initial condition $u(x,0)=u_0(x)$, where $u_0\colon\M\to\R$ is some scalar function, and boundary conditions determining the behavior of $u(x,t)$ at $x\in\partial\M$. Two common choices for boundary conditions are \emph{Dirichlet} conditions, which prescribe the values of $u(x,t)$ for all $(x,t)\in\partial\M\times(0,\infty)$, and \emph{Neumann} conditions, which specify $\nabla u(x,t)\cdot\hat n(x)\equiv0$ for all $x\in\partial\M$.

\subsection{Example PDE}

Our framework arose in our study of functions $u(x,t)$ satisfying the following PDE:
\begin{equation}\label{eq:logheat}
    \frac{\partial u}{\partial t} - \|\nabla u\|_2^2 = \Delta u.
\end{equation}

This PDE will be referred to in this paper as the ``nonlinear diffusion" equation (see \S\ref{subsec:nonlineardiffusion}). Its name originates from the nonlinear nature of the PDE and its relationship with heat diffusion shown in the proposition below.
\begin{proposition} \label{prop-heat}
Suppose $v(x,t)$ is such that $v(x,t)>0$ for all $(x,t)\in\M\times[0,\infty)$ and $v$ satisfies the heat equation:
\begin{equation}\label{eq:heat}
    \frac{\partial v}{\partial t} = \Delta v.
\end{equation}
Define $u(x,t)\eqdef\log v(x,t)$.  Then, $u(x,t)$ satisfies (\ref{eq:logheat}). Moreover, if $v$ satisfies Dirichlet or Neumann boundary conditions, then $u$ satisfies the same boundary conditions up to applying $\log$. In sum, $u$ satisfies the boundary conditions satisfied by $v$ up to applying $\log$.
\end{proposition}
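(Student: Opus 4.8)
The plan is to verify \eqref{eq:logheat} by a direct chain-rule computation; this is the Cole--Hopf substitution adapted to the surface setting. Since $v>0$ everywhere and $v$ is smooth in the interior of $\M$ as a solution of the heat equation \eqref{eq:heat}, the function $u=\log v$ is well defined and inherits the same regularity, and $1/v$ is finite and smooth, so every manipulation below is legitimate.

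First I would compute the time derivative using \eqref{eq:heat}: $\partial_t u = \partial_t v / v = \Delta v / v$. Next the spatial gradient: $\nabla u = \nabla v / v$, hence $\|\nabla u\|_2^2 = \|\nabla v\|_2^2 / v^2$. For the Laplacian I would apply the divergence product rule together with $\nabla(1/v) = -\nabla v / v^2$, giving
$\Delta u = \nabla\cdot\!\left(\tfrac{1}{v}\nabla v\right) = \tfrac{1}{v}\Delta v + \nabla v\cdot\nabla\!\left(\tfrac{1}{v}\right) = \tfrac{1}{v}\Delta v - \tfrac{1}{v^2}\|\nabla v\|_2^2$.
Subtracting, $\partial_t u - \|\nabla u\|_2^2 = \tfrac{1}{v}\Delta v - \tfrac{1}{v^2}\|\nabla v\|_2^2 = \Delta u$, which is exactly \eqref{eq:logheat}.

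For the boundary conditions, the Dirichlet case is immediate: if $v(x,t)=g(x,t)$ on $\partial\M\times(0,\infty)$ then $u(x,t)=\log g(x,t)$ there, i.e.\ the prescribed data is transformed by $\log$. For the homogeneous Neumann case, from $\nabla u = \nabla v/v$ and $v>0$ we get $\nabla u(x,t)\cdot\hat n(x) = \tfrac{1}{v(x,t)}\,\nabla v(x,t)\cdot\hat n(x) = 0$ whenever $\nabla v\cdot\hat n\equiv 0$ on $\partial\M$, so the zero-flux condition is preserved exactly (applying $\log$ to it being vacuous, consistent with the statement).

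There is no genuine obstacle here: the argument is a routine application of the chain rule and the product rule for the divergence. The only point requiring care is the positivity hypothesis $v>0$, which is what guarantees that $\log v$ is defined and differentiable so that the chain rule applies, and that $1/v$ stays finite — this is needed both for the Laplacian identity and for transferring the Neumann condition.
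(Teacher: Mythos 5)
Your proof is correct and follows essentially the same chain-rule (Cole--Hopf) computation as the paper, just run in the forward direction (heat equation $\Rightarrow$ nonlinear diffusion) rather than the paper's backward-substitution layout. You additionally verify the Dirichlet and Neumann boundary-condition claims explicitly, which the paper's proof omits; that is a welcome completion rather than a deviation.
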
 
\begin{proof} Let $u=\log v$, then this is a straightforward application of the chain rule:
\begin{equation*}
\begin{array}{rcl}
    \frac{\partial u}{\partial t} - \|\nabla u\|_2^2 &=& \Delta u\\[5pt]
    \implies \frac{1}{v} \frac{\partial v}{\partial t} - \frac{1}{v^2}\|\nabla v \|_2^2 &=& \nabla \cdot \big(\frac{1}{v} \nabla v \big)\\[5pt]
    &=& -\frac{1}{v^2} (\nabla v \cdot \nabla v) + \frac{1}{v} \Delta v\\[5pt]
    \implies\frac{\partial v}{\partial t}&=&  \Delta v 
    \end{array}
\end{equation*}
\end{proof}

\begin{remark}
Proposition \ref{prop-heat} only applies to strictly positive functions $v(x,t)$; by the maximum principle, it is sufficient to check $v(x,0)>0$ to guarantee this condition for all $t\in[0,\infty)$. Also, $u$ is a supersolution to the heat equation since clearly $\nicefrac{\partial u}{\partial t}\geq\Delta u$.
\end{remark}

The overarching theme of this paper is that the same numerical procedure that we developed to solve \eqref{eq:logheat} can be generalized to handle other parabolic problems of the form \eqref{eq:pdeform} given certain continuity and convexity assumptions on $H$ and $u$. In particular, we assume the following:

\begin{itemize}[leftmargin=20pt]
    \item[(A1)] \emph{Continuity.} The function $H(x,q,u)$ is continuous on $T^*\M\times \R$.
    \item[(A2)] \emph{Monotone convexity.} $\exists c\in \R$ such that, whenever $w\leq u$, we have $H(x,q,u)-H(x,q,w)\geq c(u-w)$.
    \item[(A3)] \emph{Lipschitz.} Locally on $\M$, we have $\|H(x,q,u)-H(y,q,u)\|_2\leq L(1+\|q\|_2)\|x-y\|_2$ , where $L$ is a Lipschitz constant.
\end{itemize}

In this paper, we consider three choices of the function $H=H(x,q,u)$: 
\subsubsection{Nonlinear diffusion} \label{subsec:nonlineardiffusion} First, motivated by equation \eqref{eq:logheat}, we consider $H(x,q,u)=-\|q\|_2^2$. In \S\ref{subsec:barycenter}--\ref{subsec:measure-interp}, we will show that the nonlinear diffusion equation together with Proposition \ref{prop-heat} play a key role in overcoming the limitations faced by methods relying on small amounts of diffusion to compute entropy regularized transport distances on discrete domains.

Here, we verify assumptions (A1)--(A3) are fufilled: (A1) follows from the continuity of the $L_2$ norm; (A2) and (A3) are empty conditions in this case since $H$ does not depend on $x$ or $u$. These assumptions can be verified straightforwardly for remaining example PDE by the interested reader.

\subsubsection{G-equation} \label{subsec:Gequation} Second, let $\Phi\colon\M\to\R^3$ be a vector field, we consider $H(x,q,u)=\Phi(x)\cdot q - \|q\|_2$, which corresponds to the $G$-equation (see \S\ref{subsubsec:Gequation}):
\begin{equation}\label{eq:G-equation}
    \frac{\partial u}{\partial t} + \Phi(x)\cdot \nabla u - \|\nabla u\|_2 = \varepsilon\Delta u.
\end{equation}

For $\varepsilon>0$, equation \eqref{eq:G-equation} is known as the \textit{viscous} $G$-equation, and when $\varepsilon=0$, it is known as the \textit{inviscid} $G$-equation. 

\subsubsection{Fokker-Planck equation} \label{subsec:fokker-planck} Finally, we consider $H(x,q,u)=u\nabla\cdot\Phi(x) + q\cdot\Phi(x)$, which corresponds to the Fokker-Planck equation (see \S\ref{subsubsec:FPequation}): 

\begin{equation}\label{eq:fokker-planck}
    \frac{\partial u}{\partial t} + u\nabla\cdot\Phi(x) + \nabla u\cdot\Phi(x) = \varepsilon\Delta u.
\end{equation}
Here $u$ is the density function of the trajectories of the following SDE:
\begin{equation}\label{eq:sde}
    \mathrm{d}x = \nabla\cdot\Phi(x)\mathrm{d}t + \sqrt{2\varepsilon}\;\mathrm{d}W(t),
\end{equation}
where $W(t)$ is a Wiener process. We refer the reader to \cite{medved2020langevin} for a review on the relationship between equations \eqref{eq:fokker-planck}--\eqref{eq:sde}. The vector field $\Phi$ is typically known as the drift vector and $\varepsilon$ as the diffusion coefficient, but we will call the latter the viscosity parameter for consistency throughout this paper.

\begin{figure}
  \centering
  \includegraphics[width=\linewidth]{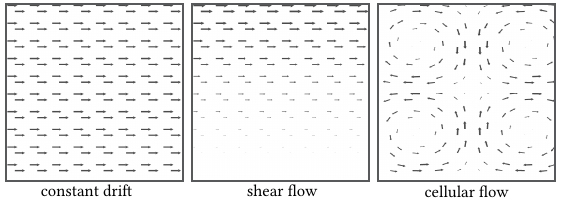}
  \caption{Three typical examples of vector fields $\Phi$ used when evolving second-order parabolic PDE that involve terms with vector fields.}
  \label{fig:types-flow}
\end{figure}

\subsection{Viscosity Solutions}

We introduce a few definitions from \cite{crandall1988viscosity} in the study of weak solutions to Hamilton-Jacobi equations. These definitions will be necessary in the proofs presented in section \S\ref{subsubsec:hterm}. We refer the reader to \cite{crandall1992users} for a complete introduction to the theory of viscosity solutions and their applications to PDE.

\begin{definition}[Viscosity subsolutions, and resp., supersolutions]
     Let $V$ be an open subset in the manifold $\M$. A function $u\colon V \to \R$ is a viscosity subsolution (resp., supersolution) of $\frac{\partial u}{\partial t}+H(x,q,u)=0$ if for every $C^1$ function $\varphi\colon V\to \R$ and every point $x\in V$ such that $u-\varphi$ has a local maximum (resp., minimum) at $x$, we have $\frac{\partial \varphi}{\partial t}+H(x,\nabla \varphi,\varphi)\leq0$ (resp., $\frac{\partial \varphi}{\partial t}+H(x,\nabla \varphi,\varphi)\geq0$).
\end{definition}

\begin{figure}[t]
  \centering
  \includegraphics[width=\linewidth]{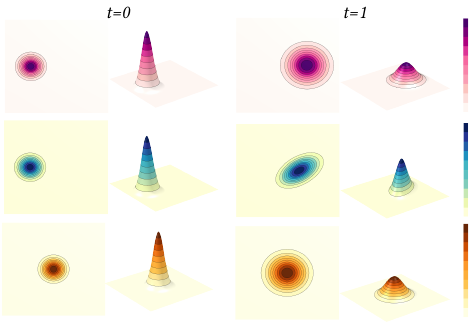}
  \caption{Time evolution of the Fokker-Planck equation \eqref{eq:fokker-planck} on a $100\times100$ triangle grid, obtained using our method, under constant drift (top), shear flow (middle), and no drift (bottom). See Figure \ref{fig:types-flow} for an illustration of constant drift and shear flow.}
  \label{fig:grid-fokker-planck}
\end{figure}

\begin{definition}[Viscosity solution]
     A function $u\colon V\to \R$ is a viscosity solution of $\frac{\partial u}{\partial t}+H(x,q,u)=0$ if it is both a subsolution and a supersolution.
\end{definition}

We note that the above definitions do not assume the function $u$ to be continuous. In fact, the necessity for such definitions arose in the study of problems without continuous solutions. As stated in \S\ref{subsec:preliminaries}, the functions we study are twice differentiable if $\varepsilon>0$ and hence the following theorem is useful:

\begin{theorem}[Continuously differentiable viscosity solution]
    A $C^1$ function $u\colon V\to\R$ is a viscosity solution if and only if it is a classical solution.
\end{theorem}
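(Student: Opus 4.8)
The plan is to prove both implications directly from the definitions, using the fact that a $C^1$ function is an admissible test function against itself. For the direction ``classical $\Rightarrow$ viscosity,'' suppose $u\in C^1(V)$ solves $\frac{\partial u}{\partial t}+H(x,\nabla u,u)=0$ pointwise. Let $\varphi$ be any $C^1$ test function and let $x\in V$ be a point where $u-\varphi$ has a local maximum; after subtracting the constant $\varphi(x)-u(x)$ from $\varphi$ we may assume $\varphi(x)=u(x)$, which changes neither the location of the extremum nor the derivatives of $\varphi$. Since $V$ is open, $x$ is an interior extremum of the $C^1$ function $u-\varphi$, so all its first-order derivatives vanish there; hence $\nabla\varphi(x)=\nabla u(x)$ and $\frac{\partial\varphi}{\partial t}(x)=\frac{\partial u}{\partial t}(x)$. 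Substituting, $\frac{\partial\varphi}{\partial t}(x)+H(x,\nabla\varphi(x),\varphi(x))=\frac{\partial u}{\partial t}(x)+H(x,\nabla u(x),u(x))=0\leq 0$, which is exactly the subsolution inequality; the identical computation at a local minimum of $u-\varphi$ yields the value $0\geq 0$, the supersolution inequality. Hence $u$ is a viscosity solution.

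For the direction ``viscosity $\Rightarrow$ classical,'' suppose $u\in C^1(V)$ is a viscosity solution, and test against $\varphi\equiv u$, which is permissible precisely because $u$ is $C^1$. Then $u-\varphi\equiv 0$, so every point $x\in V$ is simultaneously a local maximum and a local minimum of $u-\varphi$. The subsolution property gives $\frac{\partial u}{\partial t}(x)+H(x,\nabla u(x),u(x))\leq 0$ for all $x\in V$, while the supersolution property gives the reverse inequality for all $x\in V$; together these force $\frac{\partial u}{\partial t}+H(x,\nabla u,u)=0$ throughout $V$, i.e., $u$ is a classical solution.

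I do not expect a substantive obstacle: the proof is essentially careful bookkeeping with the definition. The two points that need attention are (i) that at an interior extremum of the $C^1$ difference $u-\varphi$ the full set of first derivatives of $u$ and $\varphi$ agree — the spatial (intrinsic) gradient on $\M$ as well as the time derivative — which is just the first-derivative test and is coordinate-independent, and (ii) the harmless normalization $\varphi(x)=u(x)$ at the contact point, so that the value argument of $H$ is filled by $u(x)$. With these in hand both directions close immediately; no compactness, regularity theory, or properties of $H$ beyond continuity are needed.
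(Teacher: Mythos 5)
Your argument is the standard textbook proof of this equivalence, and it is correct; the paper itself states this theorem without proof, treating it as a known fact from the theory of viscosity solutions (cf.\ the ``user's guide'' of Crandall--Ishii--Lions), so there is no in-paper argument to compare against. Both directions are handled properly: the first-derivative test at an interior extremum of the $C^1$ function $u-\varphi$ gives $\nabla\varphi=\nabla u$ and $\partial_t\varphi=\partial_t u$ at the contact point, and testing a $C^1$ viscosity solution against $\varphi\equiv u$ makes every point simultaneously a local maximum and minimum, forcing the classical equation. One remark worth making explicit: your normalization $\varphi(x)=u(x)$ is not merely cosmetic. The paper's definition places $\varphi$ (rather than $u$) in the value slot of $H$, and under a literal reading with unnormalized test functions the ``classical $\Rightarrow$ viscosity'' direction would fail without monotonicity of $H$ in its last argument, since $\varphi(x)$ could differ from $u(x)$ by an arbitrary constant while leaving all derivatives at the contact point unchanged. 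Your proof implicitly (and correctly) adopts the standard convention that test functions touch $u$ at the contact point, which is the reading under which the theorem is true as stated; it would strengthen the write-up to say so in one sentence rather than folding it into ``we may assume.''
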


Hence, while we might still use the term viscosity solution throu- ghout the paper, when referring to the problem in \eqref{eq:HJ} with $\varepsilon>0$, a viscosity solution is equivalent to a solution in the classical sense.

\begin{figure}
  \centering
  \includegraphics[width=0.9\linewidth]{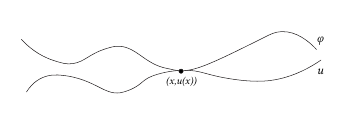}
  \caption{A two dimensional illustration of a viscosity supersolution $\varphi$. \vspace{-0.2in}}
  \label{fig:viscosity-supersolutions}
\end{figure}

\section{Method}

Let $\Omega=(V,E,F)$ be a triangle mesh with vertices $V\subset\R^3$, edges $E\subseteq V\times V$, and triangular faces $F\subset V\times V\times V$. In this section, we develop a general approach to approximating solutions to equations of the form \eqref{eq:pdeform} on triangle meshes, given $u(x,0)$ discretized on $(V,F)$ using one value per vertex, $u_0\in\R^{|V|}$. 

In geometry processing, the most common discretization of the PDE on triangle meshes uses the \emph{finite element method} (FEM). Following \cite{botsh2010polygon}, take $L\in\R^{|V|\times|V|}$ to be the cotangent Laplacian matrix associated to our mesh (with appropriate boundary conditions), and take $M\in\R^{|V|\times|V|}$ to be the diagonal matrix of Voronoi cell areas. Also, take $G\in\R^{3|F|\times|V|}$ to be the matrix mapping vertex scalar values to per face gradients. In our time discretization strategy, we will denote the time step size by $h$.

In what follows, we first describe our method for time integration, and then we define a spatio-dicretization strategy suitable to the different terms associated with PDE of the form in \eqref{eq:pdeform}.

\subsection{Time Integration}



Two typical means of solving PDE in time are explicit time integration, such as the \textit{forward Euler} method, and implicit integration, such as the \textit{backward Euler} method. 

\subsubsection{Forward Euler} Forward Euler often imposes a strict restriction (upper bound) on $h$ for stability.  Runge--Kutta and other variations of this integrator can improve stability and accuracy of forward Euler integration, but almost all have two critical drawbacks:

\begin{itemize}
    \item Time step restrictions require that we take many small steps ($h\ll1$) to avoid introducing instability. 
    \item If initial conditions are nonzero only on one vertex $v\in V$, as might be the case for algorithms based on taking the logarithm of the heat kernel (see \S\ref{subsec:barycenter}--\ref{subsec:measure-interp}), it takes $k\sim O(|V|)$ steps of this integrator---regardless of $h$---before the solution is nonzero everywhere (which is needed to apply the logarithm).
\end{itemize}

\subsubsection{Backward Euler} Backward Euler is unconditionally stable and diffuses information everywhere along the domain in a single step, addressing the two issues of forward Euler. It is first-order accurate in $h$, which often suffices for small $h>0$. If the PDE contains a nonlinear term, however,  backward Euler and related implicit integration schemes are no longer equivalent to solving a single linear system of equations; in this case, implicit integration leads to a nonlinear root-finding problem that requires a nearby guess.

We now outline a method to address the challenges faced by classical approaches in the context of solving second-order parabolic PDE.

\subsubsection{Strang splitting}\label{sec:strang} We would like to leverage the effectiveness of implicit integration for the simplest version of \eqref{eq:pdeform} where $H=0$, but without having to solve a nonlinear system of equations. To this end, we propose using \emph{Strang--Marchuk splitting} (also known \begin{wrapfigure}{l}{0.2\textwidth}
    \vspace{-.2in}
    \includegraphics[width=0.24\textwidth]{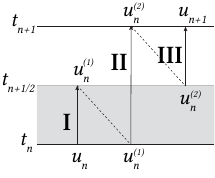} \hspace{-.1in} \vspace{-.25in}
\end{wrapfigure} as leapfrog or St\"ormer--Verlet integration), originally proposed in \cite{strang1964split, marchuk1988split}. Generically, operator splitting methods solve differential equations $\nicefrac{dx}{dt}=f(x)+g(x)$ by alternating steps in which the terms $f$ and $g$ on the right-hand side are treated individually. This splitting scheme is justified by the Lie--Trotter--Kato formula \cite{trotter1959product,kato1974trotter}. In our case, we choose a splitting so that one piece of the splitting resembles solving the  heat equation and the other benefits from the solution techniques along the lines of those introduced by \citet{belyaev2020distance} (later extended by \citet{edelstein2023convex}).

In particular, suppose $u_{n-1}\in\R^{|V|}$ is our estimate of $u$ at time $h(n-1)$.  We estimate $u_n$ via the following three steps:
\begin{enumerate}[label=\Roman*.]
    \item First, apply half a time step of implicit heat diffusion, approximating the solution to $\nicefrac{\partial u}{\partial t}=\varepsilon\Delta u$ at $t=h(n-1)+\nicefrac h2$:
    \begin{equation}\label{eq:halfheat1}
        u_n^{(1)} = \left( M - \left(\nicefrac{h}{2}\right)\varepsilon L \right)^{-1} M u_{n-1}.
    \end{equation}
    \item Next, apply a full time step approximating the solution to $\nicefrac{\partial u}{\partial t} + H(x,q, u) = 0$, as detailed in \S\ref{subsubsec:hterm}, to obtain $u_n^{(2)}$.
    \item Finally, apply a second half-step of implicit heat diffusion:
    \begin{equation}\label{eq:halfheat2}
        u_n = \left( M - \left(\nicefrac{h}{2}\right)\varepsilon L \right)^{-1} M u_n^{(2)}.
    \end{equation}
\end{enumerate}  
Note that \eqref{eq:halfheat1} and \eqref{eq:halfheat2} solve the same linear system with different right-hand sides, allowing us to pre-factor the matrix $M-\left(\nicefrac{h}{2}\right)\varepsilon L$ if we plan to apply our operator more than once.  It is also worth noting that possible time step size limitations in Strang splitting are only imposed by the method chosen to solve each subproblem. The backward Euler substep is unconditionally stable and the convex optimization substep, which is outlined next, enjoys similar stability to implicit methods. Analytically, Strang splitting has convergence of second order \cite{strang1964split}. \hfill\break\indent

There are different splitting schemes that one could choose when treating different kinds of PDE. We considered accuracy in choosing an suitable splitting scheme for our case. The Lie-Trotter scheme, for instance, is first-order and can provide crude results for nonlinear problems \cite{strang1964split}. Strang splitting is second-order, i.e., $\mathcal{O}(h^2)$ given a time step $h$. The nonlinearity in various second-order parabolic PDE makes Strang a preferable choice.

\subsubsection{Strang-Marchuk splitting, step II} \label{subsubsec:hterm} We use convex optimization to take a time step of the first-order equation $\nicefrac{\partial u}{\partial t} + H(x,q, u)=0$, including the case where this is a nonlinear first-order problem. In this section, we describe an implicit integrator leveraging the fact that $-H(x,q,u)$ is jointly convex in its arguments $(q,u)$.

In this step, our goal is to obtain $u_n^{(2)}$ by integrating the first-order equation for time $h$ starting at function $u_n^{(1)}$.  The simplest implicit integration strategy is \emph{backward Euler} integration, which would solve the following root-finding problem:
\begin{equation}\label{eq:implicit}
\frac{u_n^{(2)}-u_n^{(1)}}{h}+H(x,\nabla u_n^{(2)},u_n^{(2)})=0.
\end{equation}
Here, we will assume the $u_n^{(i)}$ are functions on the surface $\M$, but an identical formulation will apply after spatial discretization in \S\ref{sec:spatialdiscretization}.  This problem is nonlinear whenever $H$ is nonlinear, making it difficult to solve.

Instead, we observe that relaxing the equality in \eqref{eq:implicit} to an inequality leads to a \emph{convex} constraint on the unknown function $u_n^{(2)}$.  Taking inspiration from \citet{edelstein2023convex}, we arrive at the following optimization problem to advance forward in time:
\begin{equation}\label{eq:optprob}
    u_n^{(2)}=
    \left\{
    \begin{array}{rl}
    \arg\min_u \quad&\int_\M u(x) \,\mathrm{dVol}(x)\\
    \mathrm{subject \; to} \quad & 
    \frac1h(u-u_n^{(1)}) + H(x,\nabla u, u) \geq 0\\
    &\qquad\mathrm{\; for \; all\;} x\in\M.
    \end{array}
    \right.
\end{equation}
Notice that the constraint in \eqref{eq:optprob} yields a \emph{supersolution} of the implicit problem \eqref{eq:implicit}.  Moreover, formulation \eqref{eq:optprob} is a convex problem for $u$ since it has a linear objective and a pointwise convex constraint. 

The intuitive argument for the equivalence between solutions to eikonal type equations and solutions to a convex optimization problem given in \cite{belyaev2020distance}, and later used by \cite{edelstein2023convex} is not sufficient to obtain the well-posedness of \eqref{eq:optprob} and its equivalence to \eqref{eq:implicit}. These works are similar to ours in that they establish an equivalence between a first-order parabolic PDE and a constrained optimization problem by leveraging results from the theory of viscosity solutions. A key distinction, however, lies in the fact that PDE of the form \eqref{eq:implicit} are \textit{evolutionary} and could also have dependence on the value of the unknown function $u$ rather than just its gradient $\nabla u$. In what directly follows, however, we justify using \eqref{eq:optprob} to solve \eqref{eq:implicit}.

\begin{theorem}[Comparison principle]\label{thm:comparison}
    Assume $H$ satisfies assumptions (A1)--(A3). Let $w$ be a viscosity subsolution and $u$ be a viscosity supersolution of $\nicefrac{\partial u}{\partial t} + H(x,q,u) = 0$. Assume that $w,u$ are locally bounded and $w(x,0)\leq u(x,0)$ for all $x$. Then $w(x,t)\leq u(x,t)$ for all $(x,t)$.
\end{theorem}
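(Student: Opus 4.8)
The plan is to prove the comparison principle by the classical doubling-of-variables technique due to Crandall–Ishii–Lions, adapted to the compact manifold $\M$. The heart of the argument is the following: suppose for contradiction that $\sup_{(x,t)\in\M\times[0,T]}\bigl(w(x,t)-u(x,t)\bigr)=:\sigma>0$ for some finite time horizon $T$. To rule this out, I would first introduce the perturbed functional
\begin{equation*}
\Psi_{\alpha,\eta}(x,y,t)=w(x,t)-u(y,t)-\frac{\alpha}{2}d(x,y)^2-\frac{\eta}{T-t},
\end{equation*}
where $d(\cdot,\cdot)$ is the geodesic distance on $\M$, $\alpha$ is a large penalization parameter forcing $x\approx y$, and the $\eta/(T-t)$ term (with $\eta>0$ small) provides a strict time-decay that will yield the needed strict inequality and also confines the maximum away from $t=T$. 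Since $\M\times[0,T]$ is compact, $\Psi_{\alpha,\eta}$ attains a maximum at some point $(x_\alpha,y_\alpha,t_\alpha)$.

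The second step is the standard penalization analysis. Using $w(x,0)\le u(x,0)$ together with the local boundedness of $w,u$, one shows that for $\eta$ small enough and $\alpha$ large enough the maximizing time $t_\alpha$ is strictly positive, and that $\alpha\, d(x_\alpha,y_\alpha)^2\to0$ while $d(x_\alpha,y_\alpha)\to0$ as $\alpha\to\infty$; by compactness one may pass to a subsequence along which $x_\alpha,y_\alpha\to\hat x$ and $t_\alpha\to\hat t>0$, with $w(\hat x,\hat t)-u(\hat x,\hat t)$ close to $\sigma$. Then I would invoke the definitions of viscosity sub- and supersolution with the test functions $\varphi(x,t)=u(y_\alpha,t)+\frac\alpha2 d(x,y_\alpha)^2+\frac{\eta}{T-t}$ (for $w$ at $x_\alpha$) and $\psi(y,t)=w(x_\alpha,t)-\frac\alpha2 d(x_\alpha,y)^2-\frac{\eta}{T-t}$ (for $u$ at $y_\alpha$), together with the parabolic theorem on sums to handle the common time variable $t_\alpha$. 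Subtracting the two viscosity inequalities kills the matching time-derivative of the $\tfrac\alpha2 d^2$ term, produces the strictly positive contribution $\eta/(T-t_\alpha)^2$ from the $\eta/(T-t)$ penalty, and leaves the Hamiltonian difference
\begin{equation*}
H\bigl(x_\alpha,\alpha\nabla_x\tfrac12 d(x_\alpha,y_\alpha)^2,w(x_\alpha,t_\alpha)\bigr)-H\bigl(y_\alpha,-\alpha\nabla_y\tfrac12 d(x_\alpha,y_\alpha)^2,u(y_\alpha,t_\alpha)\bigr).
\end{equation*}
Assumption (A2) (monotone convexity) controls the dependence on the $u$-slot — because $w(x_\alpha,t_\alpha)\ge u(y_\alpha,t_\alpha)+\sigma/2>u(y_\alpha,t_\alpha)$ eventually, the $u$-dependent part is bounded below by $c$ times a positive quantity, which after absorbing into the $\eta$ term costs nothing (or can be handled by the usual substitution $w\mapsto e^{-ct}w$, $u\mapsto e^{-ct}u$ to reduce to $c\ge0$). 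Assumption (A3) (the Lipschitz estimate in $x$, with the $1+\|q\|$ growth) is exactly what bounds the spatial discrepancy: $|H(x_\alpha,q_\alpha,\cdot)-H(y_\alpha,q_\alpha,\cdot)|\le L(1+\alpha\,d(x_\alpha,y_\alpha))\,d(x_\alpha,y_\alpha)=L\,d(x_\alpha,y_\alpha)+L\,\alpha\,d(x_\alpha,y_\alpha)^2$, both of which vanish as $\alpha\to\infty$ by the penalization estimates. Assumption (A1) (continuity) is used to pass to limits. Putting these together, the right-hand side tends to $0$, contradicting the strictly positive lower bound $\eta/(T-t_\alpha)^2>0$; hence $\sigma\le0$, i.e. $w\le u$ on $\M\times[0,T]$, and since $T$ was arbitrary the claim follows on $\M\times[0,\infty)$.

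The main obstacle I anticipate is the geometric bookkeeping on the manifold rather than any single analytic step: the function $d(x,y)^2$ is only smooth near the diagonal (away from the cut locus), so one must either work in local coordinates once $x_\alpha,y_\alpha$ are known to be close, or replace $d^2$ by an extrinsic surrogate such as $\|x-y\|_{\R^3}^2$ using that $\M$ is embedded in $\R^3$, and carefully relate its gradient to a cotangent vector so that (A3) applies with the correct $\|q\|$. A secondary technical point is the rigorous form of the parabolic maximum-principle/"theorem on sums" in the nonsmooth setting — since the paper only needs this for the viscous case $\varepsilon>0$ where, by the earlier theorem, viscosity solutions are classical $C^1$ (indeed $C^2$) solutions, one can if desired bypass the jets entirely and run the doubling argument with ordinary derivatives, which simplifies the exposition considerably while still covering all cases used in the paper.
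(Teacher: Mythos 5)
Your proposal is correct and follows essentially the same doubling-of-variables argument as the paper's proof (a sketch of Barles's Theorem 5.2): penalize to force the two spatial points together, use (A3) to control the resulting Hamiltonian discrepancy in $x$ and (A2) to control the dependence on the solution value, and derive a contradiction from a strict-subsolution term. The only substantive difference is that the paper also doubles the \emph{time} variable (penalizing $\|t-s\|_2^2$) and strictifies the subsolution by adding a multiple of $t$, which sidesteps the parabolic theorem on sums that your shared-time formulation with the $\eta/(T-t)$ barrier requires.
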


\begin{theorem}\label{thm:uniqueness}
    Assume $H$ fulfills (A1)--(A3). Then \eqref{eq:implicit} has a unique solution in the viscosity sense.
\end{theorem}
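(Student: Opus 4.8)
\emph{Proof proposal.} The plan is to recast \eqref{eq:implicit} as a \emph{stationary} first-order Hamilton--Jacobi equation that is strictly monotone in its zeroth-order term, to establish a comparison principle for it by reducing to Theorem~\ref{thm:comparison}, and then to read off uniqueness; existence follows afterwards from Perron's method. Concretely, write $f\eqdef u_n^{(1)}$ and set $\widetilde H(x,q,r)\eqdef \tfrac1h\big(r-f(x)\big)+H(x,q,r)$, so that \eqref{eq:implicit} is exactly $\widetilde H(x,\nabla u,u)=0$. Since $f$ comes from the heat substep \eqref{eq:halfheat1}, it is continuous and (see the last paragraph) locally Lipschitz, so $\widetilde H$ inherits (A1) and (A3) (with Lipschitz constant $L+\mathrm{Lip}(f)/h$), and it inherits (A2) with the improved constant $\widetilde c\eqdef \tfrac1h+c$. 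For $h$ small enough (any $h>0$ when $c\ge 0$, and $h<-1/c$ otherwise) we get $\widetilde c>0$: the equation is strictly monotone in $u$.

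First I would prove a comparison statement for the stationary equation: every bounded viscosity subsolution $w$ of $\widetilde H(x,\nabla u,u)=0$ lies below every bounded viscosity supersolution $v$. The idea is to lift $v$ in a fictitious time variable. Put $K\eqdef \sup_\M(w-v)<\infty$ (finite because $\M$ is compact) and $\phi(t)\eqdef Ke^{-\widetilde c t}$. Since $w$ does not depend on time, $W(x,t)\eqdef w(x)$ is a viscosity subsolution of the \emph{evolutionary} equation $\partial_t U+\widetilde H(x,\nabla U,U)=0$ (at an interior space--time maximum of $W-\varphi$ the $t$-derivative of $\varphi$ vanishes). A short computation using $\phi'+\widetilde c\,\phi=0$, $\phi\ge 0$, the monotonicity inequality $\widetilde H(x,q,r+\phi)\ge\widetilde H(x,q,r)+\widetilde c\,\phi$ from (A2), and the supersolution property of $v$ shows that $V(x,t)\eqdef v(x)+\phi(t)$ is a viscosity supersolution of the same evolutionary equation. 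As $W(\cdot,0)=w\le v+K=V(\cdot,0)$ and both are locally bounded, Theorem~\ref{thm:comparison} gives $w(x)\le v(x)+Ke^{-\widetilde c t}$ for all $(x,t)$; letting $t\to\infty$ yields $w\le v$ on $\M$. Applying this with $w$ and $v$ two viscosity solutions of \eqref{eq:implicit}, each of which is simultaneously a sub- and a supersolution, gives $w\le v$ and $v\le w$, hence uniqueness.

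For existence I would invoke Perron's method: the pointwise supremum of all bounded subsolutions lying below a fixed supersolution is a viscosity solution, provided a comparison principle and ordered sub/super barriers are available. The comparison principle is the paragraph above, and for the Hamiltonians of interest constant functions $r_\pm$ ($r_-$ sufficiently small, $r_+$ sufficiently large) are a sub- and a supersolution of $\widetilde H(x,\nabla u,u)=0$ thanks to $\widetilde c>0$ and the boundedness of the relevant data on the compact $\M$; these pin the Perron solution between $r_-$ and $r_+$, and the standard bump-function argument (together with comparison applied to the semicontinuous envelopes) upgrades it to a genuine, continuous viscosity solution. Boundary conditions enter in the usual way: by restricting admissible test points for Neumann data, and by restricting the class of admissible sub/supersolutions for Dirichlet data.

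The main obstacle is that the comparison principle we are handed, Theorem~\ref{thm:comparison}, is stated for the \emph{time-dependent} equation with \emph{ordered initial data}, while \eqref{eq:implicit} is stationary, so it cannot be applied directly; the device that rescues the reduction is the strict monotonicity $\widetilde c>0$ contributed by the $\tfrac1h$ term, which makes the time-lift $\phi(t)=Ke^{-\widetilde c t}$ decay to zero and thereby squeeze $w$ below $v$. A secondary point requiring care is verifying that $\widetilde H$ still obeys (A3), i.e.\ that $u_n^{(1)}$ is locally Lipschitz in $x$: this is immediate when $\varepsilon>0$ because the heat substep is smoothing, and when $\varepsilon=0$ it follows by induction on $n$ from the Lipschitz regularity of viscosity solutions of the $H$-substep under (A1)--(A3).
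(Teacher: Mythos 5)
Your proof is correct, but it takes a different route from the paper's. The paper (Appendix~\ref{sec:uniqueness}) sketches Perron's method directly for the \emph{evolutionary} Cauchy problem $\nicefrac{\partial u}{\partial t}+H(x,q,u)=0$, citing Barles' Theorems 5.2 and 7.1, and implicitly identifies \eqref{eq:implicit} with that setting; you instead treat \eqref{eq:implicit} as what it literally is --- a \emph{stationary} Hamilton--Jacobi equation $\widetilde H(x,\nabla u,u)=0$ with a strictly monotone zeroth-order term --- and derive its comparison principle from the evolutionary Theorem~\ref{thm:comparison} via the exponential time-lift $V(x,t)=v(x)+Ke^{-\widetilde c t}$, before running Perron with constant barriers for existence. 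Your reduction is the more honest match to the theorem as stated, and it makes explicit two things the paper glosses over: (i) the uniqueness mechanism is precisely the strict monotonicity $\widetilde c=\nicefrac1h+c>0$ contributed by the backward-Euler term, which for $c<0$ (e.g.\ Fokker--Planck with compressible drift, where $c=\min_x\nabla\cdot\Phi$) imposes a genuine time-step restriction $h<-\nicefrac1c$ that the theorem statement omits; and (ii) the hypothesis (A3) for $\widetilde H$ requires Lipschitz control of $u_n^{(1)}$ in $x$, which you flag and justify at about the same level of rigor as the paper's own sketch. The cost of your route is that it leans on Theorem~\ref{thm:comparison} as a black box for the lifted problem and still defers the Perron bump-function argument to the standard literature, exactly as the paper does; the benefit is a self-contained bridge from the evolutionary comparison principle to the stationary implicit-Euler equation that the paper never spells out.
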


The existence and uniqueness results for Cauchy problems of the form $\nicefrac{\partial u}{\partial t} + H(x,q,u) = 0$ on an open subset of $\R^N$, under essentially the same assumptions (A1)--(A3), is given in \cite{barles2013viscosity} (see \S5 Theorem $5.2$). The comparison principle proof for the same Cauchy problem is also given by \citet{barles2013viscosity} (see \S7 Theorem $7.1$). Readers can also refer to Appendix \ref{sec:comparison} and \ref{sec:uniqueness} for a sketch of these proofs. While the domain in \cite{barles2013viscosity} is an open subset of $\R^N$, the authors in \cite{peng2008maximum} justify the extension of Theorems \ref{thm:comparison}--\ref{thm:uniqueness} to manifolds with or without a boundary.

\begin{corollary}
    Under the same assumptions on $H$, the minimal viscosity supersolution to \eqref{eq:implicit} is the equation's unique solution in the viscosity sense.
\end{corollary}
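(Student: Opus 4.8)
The plan is to derive the corollary almost immediately from Theorems~\ref{thm:comparison} and~\ref{thm:uniqueness}, together with the observation that the constraint in~\eqref{eq:optprob} characterizes exactly the viscosity supersolutions of~\eqref{eq:implicit}. First I would unwind the definitions: a function $u$ satisfying $\frac1h(u-u_n^{(1)}) + H(x,\nabla u, u) \geq 0$ pointwise (in the appropriate weak/viscosity sense, using the supersolution definition with test functions $\varphi$ where $u-\varphi$ has a local minimum) is precisely a viscosity supersolution of the backward-Euler equation~\eqref{eq:implicit}, viewed as a stationary Hamilton--Jacobi equation with effective Hamiltonian $\widetilde H(x,q,u) = \frac1h(u - u_n^{(1)}(x)) + H(x,q,u)$. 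One checks that $\widetilde H$ inherits (A1)--(A3) from $H$: continuity is immediate, the monotone-convexity constant shifts from $c$ to $c + \tfrac1h$ (which we may take positive for $h$ small, giving strict monotonicity), and the Lipschitz bound in $x$ is unaffected up to the smooth term $\tfrac1h u_n^{(1)}(x)$.

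Next I would invoke Theorem~\ref{thm:uniqueness} to get a unique viscosity solution $u^\star$ of~\eqref{eq:implicit}; by definition $u^\star$ is in particular a viscosity supersolution, hence feasible for~\eqref{eq:optprob}. The heart of the argument is then to show $u^\star$ is the \emph{minimal} feasible point, i.e.\ $u^\star(x) \le u(x)$ for every viscosity supersolution $u$. This is where Theorem~\ref{thm:comparison} enters: $u^\star$ is also a viscosity \emph{subsolution} (being the solution), $u$ is a supersolution, and since~\eqref{eq:implicit} is a stationary equation the comparison principle applies without any initial-condition hypothesis to distinguish them — or, casting it as the degenerate evolution problem with zero time-dependence, the "initial data" coincide trivially. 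Comparison then yields $u^\star \le u$ pointwise. Combined with feasibility of $u^\star$, this shows $u^\star$ is the minimal viscosity supersolution, and since it is also the unique viscosity solution, the two coincide, which is the claim. I would also note that minimality implies the minimizer of the linear functional $\int_\M u\,\mathrm{dVol}$ over the feasible set is attained exactly at $u^\star$, tying the corollary back to~\eqref{eq:optprob}.

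The main obstacle I anticipate is the identification in the first step: rigorously matching "the constraint in~\eqref{eq:optprob} holds pointwise" with "$u$ is a viscosity supersolution of~\eqref{eq:implicit}" requires care, because the constraint as literally written presumes $\nabla u$ exists, whereas supersolutions need not be differentiable. The honest reading is that~\eqref{eq:optprob} is shorthand for the viscosity-supersolution condition (test functions touching from below), and I would state this explicitly. A secondary subtlety is checking that Theorem~\ref{thm:comparison} is genuinely applicable to the stationary equation~\eqref{eq:implicit}: the cited results in~\cite{barles2013viscosity} are phrased for the evolutionary Cauchy problem, so I would either appeal to the standard reduction (a stationary equation is an evolution equation whose solution is constant in the artificial time variable, with matching initial data) or directly cite the stationary comparison principle, whose proof uses the strict monotonicity $c + \tfrac1h > 0$ in place of a time-exponential weight. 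Everything else — feasibility, the squeeze $u^\star \le u$ and $u^\star$ feasible $\Rightarrow$ minimal — is routine once these two points are pinned down.
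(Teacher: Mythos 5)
Your proposal is correct and follows essentially the same route the paper takes: the unique viscosity solution guaranteed by Theorem~\ref{thm:uniqueness} is in particular a supersolution, and, being also a subsolution, the comparison principle of Theorem~\ref{thm:comparison} places it below every other supersolution, so it is the minimal one. You are in fact more careful than the paper about the two genuine subtleties (reading the pointwise constraint as the viscosity-supersolution condition, and transferring the comparison principle from the evolutionary Cauchy problem to the stationary backward-Euler equation \eqref{eq:implicit}). One caveat: your first suggested fix for the second subtlety --- viewing \eqref{eq:implicit} as a degenerate evolution problem in which ``the initial data coincide trivially'' --- is circular, since the hypothesis $w(x,0)\leq u(x,0)$ would there amount to $u^\star\leq u$, which is exactly what you are trying to prove; your alternative fix, invoking the stationary comparison principle whose proof uses the strict monotonicity $c+\nicefrac{1}{h}>0$, is the correct one and should be the route you commit to.
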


With these results in hand, we can discuss the equivalence between the solution of that problem and the unique solution of \eqref{eq:implicit}.

\begin{theorem}
    Again, assume $H$ fullfils (A1)--(A3). The unique viscosity solution to \eqref{eq:implicit} is the solution to the minimization problem in \eqref{eq:optprob}.
\end{theorem}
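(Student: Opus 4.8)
The plan is to establish a two-way inequality between the optimal value / optimizer of \eqref{eq:optprob} and the unique viscosity solution $u^\star$ of \eqref{eq:implicit}, using the comparison principle (Theorem \ref{thm:comparison}) as the central tool. First I would observe that $u^\star$ itself is feasible for \eqref{eq:optprob}: since $u^\star$ solves \eqref{eq:implicit} exactly in the viscosity sense, it is in particular a viscosity supersolution, and a viscosity supersolution of $\nicefrac{\partial u}{\partial t}+H=0$ — reading the time derivative here as the backward-difference discretization $\frac1h(u-u_n^{(1)})$ that defines the one-step problem — satisfies the constraint $\frac1h(u-u_n^{(1)})+H(x,\nabla u,u)\geq 0$ pointwise (this is exactly the supersolution condition against constant test functions, together with the $C^1$-viscosity-solution equivalence theorem stated above, since for $\varepsilon>0$ the relevant functions are $C^1$). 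Hence $u^\star$ lies in the feasible set, so the optimal value of \eqref{eq:optprob} is at most $\int_\M u^\star\,\mathrm{dVol}$, and the minimizer $u_n^{(2)}$ satisfies $\int_\M u_n^{(2)}\,\mathrm{dVol}\leq\int_\M u^\star\,\mathrm{dVol}$.

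Next I would prove the reverse pointwise inequality $u_n^{(2)}(x)\geq u^\star(x)$ for all $x$, which combined with the previous step forces $u_n^{(2)}=u^\star$ almost everywhere (and everywhere by continuity). The key claim is that \emph{any} feasible $u$ for \eqref{eq:optprob} dominates $u^\star$ pointwise. The constraint says $u$ is a supersolution of the one-step problem; I want to compare it against $u^\star$, which is a solution hence also a subsolution, of the same problem, with matching "initial data" $u_n^{(1)}$ at the start of the step. This is precisely the setting of the comparison principle once we phrase the single backward-Euler step as a (degenerate, stationary) Hamilton–Jacobi problem: define $\tilde H(x,q,u):=\frac1h(u-u_n^{(1)}(x))+H(x,q,u)$ and note that $\tilde H$ inherits (A1)–(A3) from $H$ — continuity is immediate, the monotone-convexity constant improves from $c$ to $c+\frac1h$, and the Lipschitz bound in $x$ picks up only the extra Lipschitz term from $u_n^{(1)}$, which is controlled since $u_n^{(1)}$ is the output of an implicit heat solve and hence regular. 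Applying the comparison principle to the subsolution $u^\star$ and the supersolution $u$ of $\tilde H(x,\nabla u,u)=0$ yields $u^\star\leq u$ everywhere; taking $u=u_n^{(2)}$ gives the reverse inequality.

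Finally I would combine the two directions: $u_n^{(2)}$ is feasible and dominated below by $u^\star$ which is itself feasible, while $u_n^{(2)}$ minimizes $\int_\M u\,\mathrm{dVol}$; since $u_n^{(2)}\geq u^\star$ pointwise and both are feasible, minimality of the integral forces $u_n^{(2)}=u^\star$, establishing the claimed equivalence. I expect the main obstacle to be the technically careful step of recasting the single time step as a stationary Hamilton–Jacobi equation to which Theorems \ref{thm:comparison}–\ref{thm:uniqueness} genuinely apply: one must check that the added $\frac1h u$ term keeps us inside the hypothesis class (this is where assumption (A2) is essential and in fact where the $\frac1h$ strictly helps), that $u_n^{(1)}$ has enough regularity for (A3) to survive, and that the notion of "supersolution of the one-step problem" coming from the convex constraint in \eqref{eq:optprob} coincides with the viscosity-supersolution notion used in the comparison principle — the latter identification relying on the $C^1$ viscosity-solution theorem and on the constraint being required to hold for \emph{all} $x\in\M$. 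The rest is bookkeeping with the linear objective.
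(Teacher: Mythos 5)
Your proposal is correct and follows essentially the same route as the paper: feasible points of \eqref{eq:optprob} are exactly the viscosity supersolutions of \eqref{eq:implicit}, and the comparison principle (Theorems \ref{thm:comparison}--\ref{thm:uniqueness}) forces every supersolution to dominate the unique solution, so the integral-minimizing supersolution is that solution. Your two-sided argument ($u^\star$ feasible gives $\int u_n^{(2)}\leq\int u^\star$, comparison gives $u_n^{(2)}\geq u^\star$ pointwise, hence equality) is in fact a more careful justification of the paper's terser claim that ``the minimal viscosity supersolution is the variable function that minimizes the integral,'' and your attention to recasting the stationary one-step problem so that (A1)--(A3) still hold addresses a point the paper leaves implicit.
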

\begin{proof}
    The only variable functions that can satisfy the inequality constraint in \eqref{eq:optprob} are by definition viscosity supersolutions of \eqref{eq:implicit}. The minimal viscosity supersolution is the variable function that minimizes the integral volume $\int_\M u(x)\mathrm{dVol}(x)$. The uniqueness given by Theorem \ref{thm:uniqueness} and the comparison principle together guarantee that the minimal viscosity supersolution is the viscosity solution to \eqref{eq:implicit}. Hence, the minimal supersolution not only satisfies the inequality constraint but actually achieves equality, meaning the constraint in our convex relaxation becomes tight. 
\end{proof}

So far, in this subsection, we have shown the existence and uniqueness of a solution to \eqref{eq:implicit}, as well as a comparison principle for viscosity solutions of this equation. We used these results to obtain the equivalence of the solution of \eqref{eq:implicit} to the solution of the convex optimization problem outlined in \eqref{eq:optprob}. Recall that this equivalence to a convex optimization problem is the key insight we use to solve one of the steps in our Strang splitting scheme; our main goal is to solve PDE of the form \eqref{eq:pdeform}.

In the spatially-discrete case (see \S\ref{sec:spatialdiscretization}), well-posedness of our convex program is immediate since it has a linear objective and satisfiable convex constraints.  Empirically, we find that our discretization of this time step closely resembles ground-truth when it is available.

\begin{figure*}
  \centering
  \includegraphics[width=\linewidth]{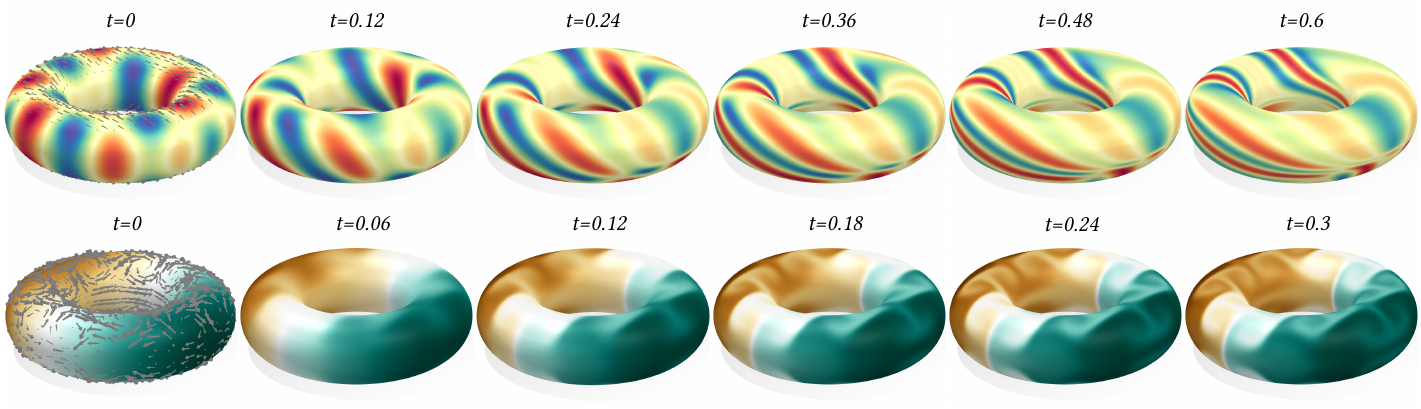}
  \caption{Results of our framework's implementation on the discrete tori with $200{,}000$ triangle faces and $100{,}000$ vertices. Top row: time evolution of the Fokker-Planck equation \eqref{eq:fokker-planck} under a certain flow. Bottom row: time evolution of the $G$-equation \eqref{eq:G-equation} under Kolmogorov flow.}
  \label{fig:tori-evolution}
\end{figure*}

\subsection{Spatial Discretization}\label{sec:spatialdiscretization}

We can approximate solutions to equation \eqref{eq:pdeform} by solving the ordinary differential equation:
\begin{equation}\label{eq:odeform}
    \frac{du}{dt} + H(x,Gu,u) = \varepsilon M^{-1} L u.
\end{equation}
We will apply the Strang splitting strategy from \S\ref{sec:strang} to this ODE, yielding steps nearly identical to those outlined in the previous section.

Splitting time integration with a step of implicit diffusion has been used in other corners of computer graphics---we refer the reader to \citet{elcott2007fluids} for an example of such an algorithm. Their implementation uses a spatial discretization via Discrete Exterior Calculus (DEC). A number of the PDE that we consider in our framework, e.g., the G-equation, do not fall into the category for which DEC is a pertinent choice of spatial discretization. We define a different choice of discretization that is suitable to the PDE that we solve. The success of the aforementioned algorithms, however, suggests the viability of splitting time integration in geometry processing and computer graphics. Moreover, in cases where spatial discretization via DEC is available to a PDE in the class we consider, our splitting and convex relaxation strategy is likely relevant to time integration of the resulting ODE.

If we use a na\"ive piecewise-linear finite element method (FEM) to discretize \eqref{eq:pdeform} spatially, the $\nicefrac{\partial u}{\partial t}$ and $\Delta u$ terms are associated to the vertices of our mesh, while $H(x,Gu,u)$ includes terms that are associated to both the vertices ($x,u$) and the triangular faces ($Gu$). Hence, we propose an alternate spatio-discretization strategy to map any face valued quantities in $H$ to per-vertex values. In particular, on a mesh, we reformulate \eqref{eq:odeform} via
\begin{equation}\label{eq:vertex-pdeform}
    \frac{du_i}{dt} + \omega_i\sum_{j\sim i} a_j H_{ij}((Gu)_j(t),u_i(t)) = \varepsilon (M^{-1} L u(t))_i.
\end{equation}
where $\omega_i = \nicefrac{1}{\sum_{j\sim i} a_j}$ and $a_i$ is the area of the triangle $j$ in the one-ring neighborhood of vertex $i$. The right-hand side of equation  \eqref{eq:vertex-pdeform} is convex whenever $H_{ij}$ is jointly convex in its inputs. Hence, our discretizations below of \eqref{eq:HJ}, \eqref{eq:G-equation}, and \eqref{eq:fokker-planck} are convex since $H_{ij}$ is convex by design whenever $H$ is convex in the continuous case. 

Next, we define $H$ discretized as a per-vertex quantity for each of the three example PDE given in sections \S\ref{subsec:nonlineardiffusion}--\ref{subsec:fokker-planck}.

\subsubsection{Nonlinear diffusion}\label{sec:nonlineardiffusion} We can now discretize $H=-\| q\|_2^2$ on the right-hand side of \eqref{eq:logheat} as follows
\begin{equation}\label{eq:HJ-Hfun}
    H_i = -\omega_i \sum_{j\sim i}\sum_{k=1}^3 (G_ku\odot G_ku)_j
\end{equation}
where $\odot$ denotes element-wise multiplication, $G_k\in\R^{|F|\times|V|}$ is the matrix mapping a function on the vertices $V$ of our mesh to the $k$th components of its per-triangle gradient, and $W$ is the diagonal matrix whose elements are the weights $\omega_i$.
\subsubsection{G-equation} Similarly, we can discretize the $H$ function for \eqref{eq:G-equation} as follows 
\begin{equation}\label{eq:G-Hfun}
    H_i = \omega_i\sum_{j\sim i}\Big(\textstyle\sum_{k=1}^3  ( \Phi_k \odot G_ku)_j - \sqrt{\textstyle\sum_{k=1}^3 (G_ku)_j\odot (G_ku)_j}\;\Big),
\end{equation}
where $\Phi_k\in\R^{|F|}$ is the $k$th component of a vector field $\Phi(x)\in \R^{3|F|}$.

\subsubsection{Fokker-Planck equation}\label{sec:fokkerplanck} Finally, discretize the divergence operator $\nabla \cdot$ by $G^\mathrm{T}M_F\in\R^{|V|\times 3|F|}$, where $M_F$ is the diagonal matrix of triangle areas. The per-vertex discretization of the $H$ functional on the right-hand side of equation \eqref{eq:fokker-planck} becomes:
\begin{equation}\label{eq:FP-Hfun}
    H_i = (u\odot (G^\mathrm{T}M_F\Phi))_i+\omega_i \sum_{j\sim i}\sum_{k=1}^3 ( G_ku \odot \Phi_k )_j.
\end{equation}

\section{Implementation Details}

After introducing the spatial discretization techniques from \S\ref{sec:spatialdiscretization}, our nonlinear time step \eqref{eq:optprob} becomes a finite-dimensional convex optimization problem:
\begin{equation}
\begin{array}{r@{\ }l}
    \text{minimize}_{u_n^{(2)}}\; &\textstyle\sum_i {\big(u_n^{(2)}\big)}_i\\
    \text{subject to}\; & u_n^{(2)} - u_n^{(1)} + hH(x,Gu_n^{(2)},u_n^{(2)})  \geq 0,
    \end{array}
\end{equation}
where $H$ denotes the discretization of the smooth function fufilling assumptions (A1)--(A3) for each of our example PDE (see \S\ref{sec:nonlineardiffusion}-\S\ref{sec:fokkerplanck}).

Our implementation uses the CVX software library \cite{cvx,gb08} equipped with the default conic solver SDPT$3$ \cite{toh1999semidefinite,ttnc2003solving}. CVX is a free software that turns \textsc{Matlab} into a modeling language. Readers familiar with \textsc{Matlab} can use CVX to implement our framework with little effort since it allows them to write constraints and objectives using \textsc{Matlab}'s standard syntax. In principle, however, any convex solver (e.g., Mosek, Gurobi, ADMM) can be used.

To be concrete, we formulate the convex optimization problem for each of the example parabolic PDE of the form in \eqref{eq:pdeform} as follows:

\begin{table*}[!htbp]
    \begin{minipage}{\textwidth}
    \caption{\centering  Determination of order for convergence in Figs. \ref{fig:sphere-time-convergence},\ref{fig:bunny-time-convergence} and \ref{fig:fandisk-time-convergence} }
    \vspace{-5pt}
    \centering
    \begin{tabular}{cccccccc}
    \hline
    &Fig. \ref{fig:sphere-time-convergence}  & Fig. \ref{fig:sphere-time-convergence}   & Fig. \ref{fig:sphere-time-convergence} & &Fig. \ref{fig:bunny-time-convergence} & &Fig. \ref{fig:fandisk-time-convergence}\\
    & $m=-4$ &  $m=-3$  &$m=-2$& & & & \\
    \hline
     $h$&  R  &  R &  R  & $h$ &  R & $h$ & R\\
    \hline
    \rowcolor{Gray}
    $6.00\times10^{-1}$& $0.830$& $0.852$ & $ 0.968$ &$5.00\times10^{-2}$ & $0.650$ & $1.50\times10^{-3}$&$0.722$ \\
    $3.00\times10^{-1}$ & $0.921$& $0.935$& $0.989$ & $2.50\times10^{-2}$&$0.775$ & $7.50\times10^{-4}$& $0.801$ \\
    \rowcolor{Gray}
    $1.50\times10^{-1}$&$0.978$& $0.986$& $1.010$ &$1.25\times10^{-2}$ &$0.872$ & $3.75\times10^{-4}$ & $0.870$ \\
    $7.50\times10^{-2}$ & $1.025$& $1.029$& $ 1.040$ &$6.25\times10^{-3}$ &$0.936$ & $1.88\times10^{-4}$ & $0.938$\\
    \rowcolor{Gray}
    $3.75\times10^{-2}$ &$1.088$& $1.091$& $1.096$ & $3.13\times10^{-3}$& $1.018$& $9.38\times10^{-5}$ & $1.024$\\
    $1.88\times10^{-2}$ &$1.217$& $1.218$& $1.220$ &$1.56\times10^{-3}$ &$1.171$ & $4.69\times10^{-5}$ & $1.171$\\
    \rowcolor{Gray}
    $9.38\times10^{-3}$ &$1.582$& $1.583$& $1.584$ & $7.81\times10^{-4}$&$1.564$ & $2.34\times10^{-5}$ & $ 1.552$\\
    \hline
    \end{tabular}
    \vspace{.2in}
    \label{tab:time-convergence}
    \end{minipage}
\end{table*}

\begin{figure}
  \centering
  \includegraphics[width=\linewidth]{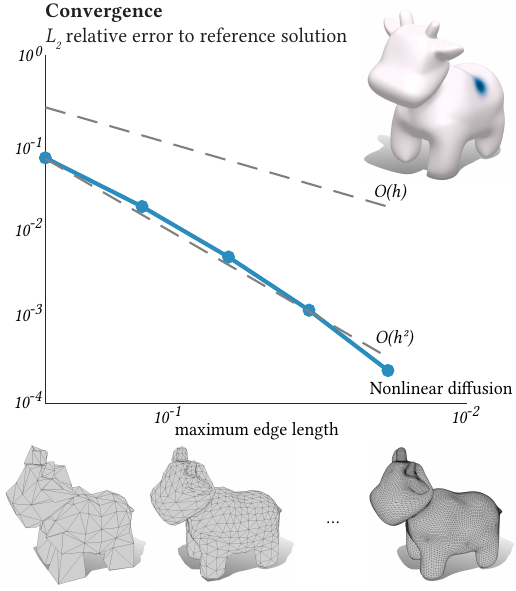}
  \caption{Convergence plot obtained for a single fixed time step $h=10^{-5}$ as the triangle mesh is refined, i.e., maximum edge length decreases ($x$-axis). The error ($y$-axis) is measured in the $L_2$ norm against the solution on the mesh of highest resolution.}
  \label{fig:space-convergence}
\end{figure}

\begin{table}
    \caption{\centering Determination of order for convergence in Fig. \ref{fig:space-convergence}}
    \vspace{-5pt}
    \centering
    \begin{tabular}{cc}
    \hline
    $\ell_{max}$ &  R  \\
    \hline
    \rowcolor{Gray}
    $2.56\times10^{-1}$& $1.864$  \\
    $1.22\times10^{-1}$& $1.932$ \\
    \rowcolor{Gray}
    $6.26\times10^{-2}$ & $2.036$ \\
    $3.37\times10^{-2}$ & $2.308$  \\
    \hline
    \end{tabular}
    \vspace{.2in}
    \label{tab:space-convergence}
\end{table}

\paragraph*{Nonlinear diffusion} In this case, we have quadratic constraints:

\begin{equation} \label{eq:quad-constraint}
\begin{array}{r@{\ }l}
    \argmin_{u^{(2)}}\; &\textstyle\sum_i {\big(u_n^{(2)}\big)}_i\\[2pt]
    \text{subject to } & u_n^{(2)} - u_n^{(1)} - h\;\omega_i \textstyle\sum_{j\sim i}\textstyle\sum_{k=1}^3 (G_ku_n^{(2)}\odot G_ku_n^{(2)})_j  \geq 0, \\[2pt]
    &\qquad\qquad\qquad\qquad\qquad\qquad\qquad i=1,\ldots,|V|.
\end{array}
\end{equation}

While the program above is convex, the constraints involve general convex quadratic forms.  It can be more efficient to convert such programs to second-order cone program (SOCP) standard form, an optimized form for standard solvers like SDPT$3$. Hence, we reformulate the above quadratic constraint as a second-order cone. In particular, algebraic manipulation shows that \eqref{eq:quad-constraint} is equivalent to the following constraints

\begin{equation}
    \left\|\begin{array}{c}\nicefrac{(1-z_j)}{2} \\ (Gu_n^{(2)})_j\end{array}\right\|_2 \leq \frac{(1+z_j)}{2},\; j=1,\ldots,|F|
\end{equation}

\begin{equation}
    \big(u_n^{(2)}\big)_i - \big(u_n^{(1)}\big)_i - h\;\omega_i\textstyle\sum_{j\sim i}z_j \geq 0,\; i=1,\ldots,|V|.
\end{equation}

\paragraph*{G-equation} In this case, we simply have constraints using norms, which is already in SOCP form.  Hence, we can use the conic solver efficiently without further reformulation. We implement our optimization as follows
\begin{equation}
\begin{array}{r@{\ }l}
    \argmin_{u^{(2)}}\; &\textstyle\sum_i {\big(u_n^{(2)}\big)}_i\\
    \text{subject to } &h\;\omega_i \textstyle\sum_{j\sim i}\big(\textstyle\sum_{k=1}^3  (\Phi_k \odot G_ku_n^{(2)})_j - \| (Gu_n^{(2)})_j\|_2\;\big)\\
    &\qquad\qquad\qquad +\; u_n^{(2)} - u_n^{(1)} \geq 0, \; i=1,\ldots,|V|.  
    \end{array}
\end{equation}

\paragraph*{Fokker-Planck equation} This equation is linear and requires no reformulation. Its implementation becomes:
\begin{equation}
\begin{array}{r@{\ }l}
    \argmin_{u^{(2)}}\; &\textstyle\sum_i {\big(u_n^{(2)}}\big)_i\\
    \text{subject to } &h\;\big((u_n^{(2)}\odot (G^\mathrm{T}M_F\Phi))_i+\omega_i \textstyle\sum_{j\sim i}\textstyle\sum_{k=1}^3 (\Phi_k \odot G_ku_n^{(2)})_j\big)\\
    &\qquad\qquad\qquad +\; u_n^{(2)} - u_n^{(1)} \geq 0, \; i=1,\ldots,|V|.
    \end{array}
\end{equation}

\begin{figure*}
  \centering
  \includegraphics[width=\linewidth]{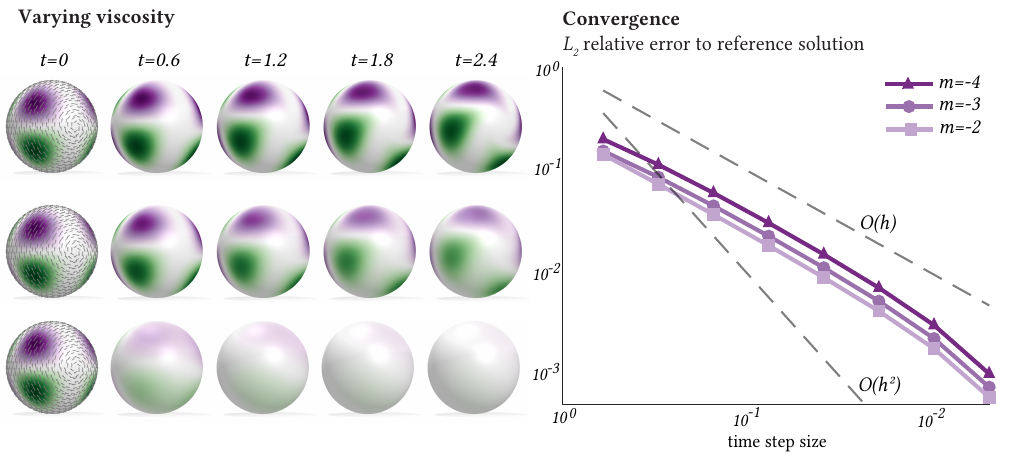}
  \caption{The effect of varying the viscosity parameter in the Fokker-Planck equation on a sphere with $81{,}920$ triangle faces and $40{,}962$ vertices.  The equation is shown evolving in time on the discrete sphere with viscosity $\varepsilon=10^{-4}$ (top row), $\varepsilon=10^{-3}$ (middle row), and $\varepsilon=10^{-2}$ (bottom row).}
  \label{fig:sphere-time-convergence}
\end{figure*}

\begin{figure}
  \centering
  \includegraphics[width=\linewidth]{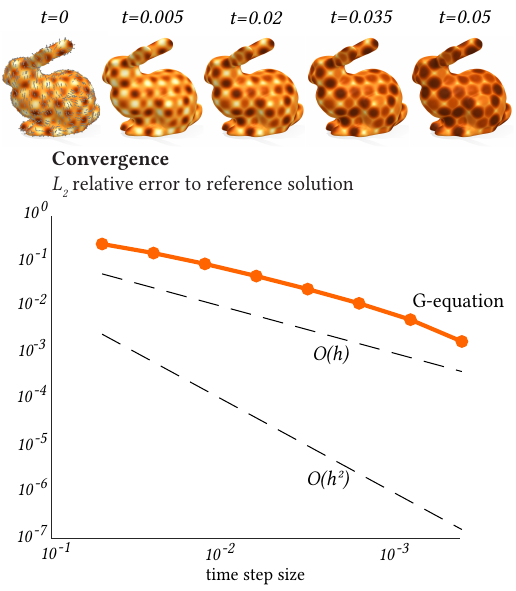}
  \caption{Convergence plot obtained for the time evolution of the $G$-equation (top row) as the time step size is decreased ($x$-axis). The error ($y$-axis) is computed using the $L_2$ norm against the solution obtained with smallest step size.}
  \label{fig:bunny-time-convergence}
\end{figure}

\begin{figure}
  \centering
  \includegraphics[width=\linewidth]{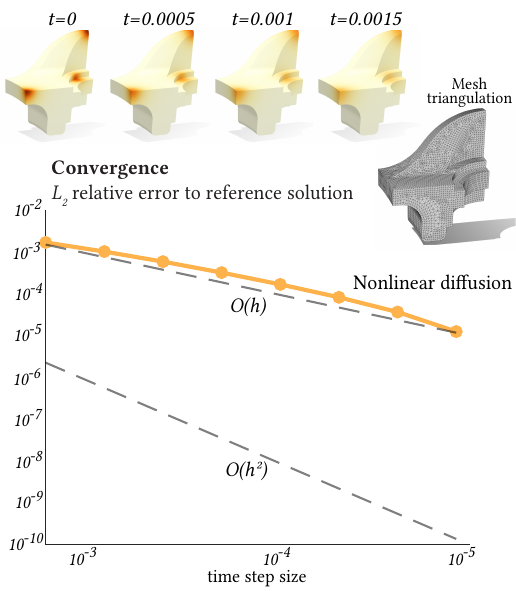}
  \caption{Convergence plot obtained for the time evolution of the nonlinear diffusion equation on a mesh with both varying triangulation quality and sharp features (top row) as the time step size is decreased ($x$-axis). The time evolution illustrations (top row) are show in exponential domain.}
  \label{fig:fandisk-time-convergence}
\end{figure}

\section{Numerical Experiments}

We demonstrate the versatility of our method for a variety of PDE on surfaces and validate its performance in this section. Our experiments were carried out in \textsc{Matlab} $2023\mathrm{a}$, on a macOS machine with $32$GB memory. All tests were run with solver tolerance $\delta^{\nicefrac{1}{2}}$, where $\delta=2.22\time 10^{-16}$ is the machine precision. Code with examples is included with this paper.

\subsection{PDEs on Surfaces}

Our main numerical contribution is a solver capable of handling a range of linear and nonlinear parabolic PDE. As will be shown later, this solver can be coupled with existing methods found in practical applications in computer graphics (see \S\ref{sec:applications}). For now, we focus on demonstrating the direct use of the solver to evolve PDE with various choices of parameters and meshes. Specifically,  given a velocity vector field $\Phi\in \R^{|F|\times3}$ defined per face on an  underlying computational mesh, we can use our framework to obtain the numerical time evolution of the $G$-equation equation and the Fokker-Planck equation as seen on the tori in Figure \ref{fig:tori-evolution}. We also refer the reader to Figures \ref{fig:sphere-time-convergence}, \ref{fig:bunny-time-convergence}, and \ref{fig:fandisk-time-convergence} for illustrations of time evolution of each of our example PDE. The order of convergence for these experiments is discussed in the next subsection.

\subsection{Convergence}

It is natural to ask whether the second-order parabolic PDE problems that we consider on curved surfaces admit closed-form solutions. To the best of our knowledge, such solutions are in general unavailable outside of a few simple cases.
Consider, e.g., the problem of solving the Fokker-Planck equation on the unit sphere. When the vector field $\Phi$ is zero, then the Fokker-Planck equation reduces to the heat equation, which is easy to solve using spherical harmonic expansion. However, when $\Phi$ is nonzero, the Hamiltonian depends on $\nabla u$, and it is typically quite difficult to express the first-order derivatives of a function in terms of a spherical harmonic expansion. We refer the reader to \S3 of \citet{barrera1985harmonics}, where the process of writing down such an expansion is described as “frustrating.”

To get close to measuring convergence to an analytical solution, we perform a self-convergence experiment. We use the solution for highest mesh resolution as the (nearly-)exact solution, denoted by $u^\ast$, and compare it to a sequence of solutions $u_N$ found for lower resolution meshes. We estimate the convergence order of the method by 
\begin{equation}\label{eq:order}
    R = \log_{2}{\left(\frac{u^\ast -u_N}{u^\ast-u_{\nicefrac{N}{2}}}\right)},
\end{equation}
where $N$ is either a space or time discretization size, i.e., maximum edge length size $\ell$ (for spatial convergence) or time step size $h$ (for time convergence).

In Figure \ref{fig:space-convergence}, we present an experiment indicating convergence under spatial refinement. While our choice of discretization is standard first-order FEM, we find that the order of convergence of our method under spatial refinement is second-order (see Table \ref{tab:space-convergence}). In Figure \ref{fig:bunny-time-convergence}, we show an experiment to determine convergence under time refinement for the evolutionary $G$-equation, and in Figure \ref{fig:sphere-time-convergence}, we do the same for the Fokker-Planck equation using various amounts of viscosity. The results are summarized in Table \ref{tab:time-convergence}. For both equations convergence is of at least first-order, and first-order convergence is consistent for various amounts of viscosity as demonstrated in the evolution of the Fokker-Planck. While Strang-Marchuk splitting encourages second-order convergence, we find empirically that the first-order nature of each step's method only guarantees first-order convergence for the scheme.

\begin{figure}[b]
  \centering
  \includegraphics[width=\linewidth]{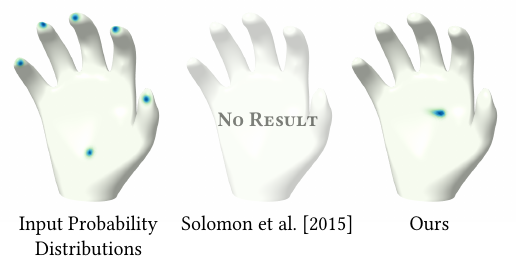}
  \caption{Barycenters with entropy value $\gamma=10^{-3}$ obtained using the convolutional method (middle) and our logarithmic diffusion method (right). The very sharp input probability distributions $\{\mu_i\}_{i=1}^6$ are shown together on the left.} 
  \label{fig:barycenters}
\end{figure}

\subsection{Robustness}

The performance of our method in meshes with varying triangulation and with sharp features is illustrated in Figure \ref{fig:fandisk-time-convergence}. In this case, our method requires smaller step sizes to achieve first-order convergence. 

\section{Applications} \label{sec:applications}

\subsection{Wasserstein Barycenter} \label{subsec:barycenter}

The ``log-sum-exp'' trick is a standard method used to stabilize numerical algorithms, including the Sinkhorn algorithm when using small amounts of entropy. This computation, however, is not possible using the method adapted to triangle meshes proposed in \cite{solomon2015convolutional}, which corresponds to taking the logarithm of a function undergoing tiny amounts of heat diffusion. We advocate for using our numerical framework and Proposition \ref{prop-heat} to solve \eqref{eq:logheat} to compute the result of heat diffusion on triangle meshes directly in the logarithmic domain, rather than diffusing in the linear domain and then taking the logarithm.

\begin{figure*}
  \centering
  \includegraphics[width=\linewidth]{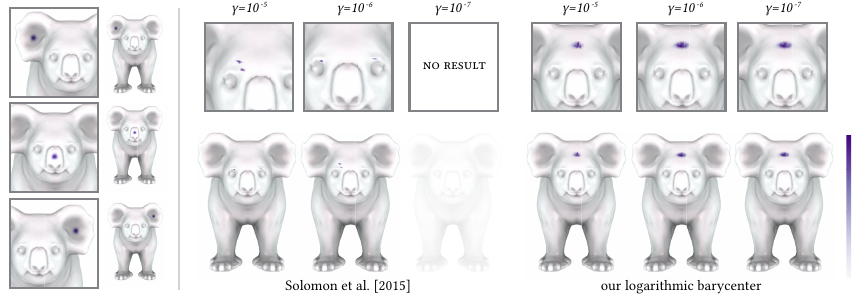}
  \caption{Small entropy coefficients can lead to numerical challenges using the state-of-the-art method (middle) for computing Wasserstein barycenters, which our logarithmic diffusion method (right) overcomes. Here, the Wasserstein barycenter of three input distributions (left) on a mesh with $99{,}994$ triangle faces and $49{,}999$ vertices is shown for different entropy coefficients.}
  \vspace*{10pt}
  \label{fig:barycenter-koala}
  \vspace{-0.1in}
\end{figure*}

In Figure \ref{fig:barycenters}, we demonstrate how our log-domain diffusion algorithm outperforms the state-of-the-art in computing Wasserstein barycenters on mesh surfaces. The method in \cite{solomon2015convolutional} fails to output a result for the entropy coefficient $\gamma=10^{-3}$ with very sharp probability distributions as input. Even excluding this failure case, the Wasserstein barycenters obtained via the convolutional method look qualitatively wrong for a number of entropy coefficients in Figure \ref{fig:barycenter-koala}, while our method remains numerically stable and obtains the expected visual results given the distributions.

\subsection{Measure Interpolation} \label{subsec:measure-interp}

Given a pair of initial and target distributions $\mu_0$ and $\mu_1$ on a triangular mesh, the same algorithm used to compute Wasserstein barycenters can be applied with weights $(1-t,t)$ to compute a time-varying sequence of distributions $\mu_t$ transitioning from initial to target, moving along geodesic paths. In Figure \ref{fig:dist-interp-horse}, we show a comparison to the convolutional method in \cite{solomon2015convolutional} for the distance interpolation task. We observe again that the convolutional method fails to obtain results with a small entropy coefficient for various pairs of weights, while our method succeeds in both smooth and noisy meshes, demonstrating the robustness of our method in this task.

\subsection{Numerical Integration for Fire and Flames} \label{subsec:numerical-fire}

\citet{osher1988curvature} provide a first-order scheme to approximate solutions to Hamilton-Jacobi equations, which is coupled with the Navier-Stokes equation by \citet{nguyen2002fire} in the physical simulation of fire for graphics applications. The first method is based on an upwind generalized first-order version of Godunov's scheme \cite{godunov1959method}. We compare our method for solving the $G$-equation to the method in \cite{osher1988curvature} and a Lax-Friedrichs scheme described in \cite{crandall1984hamiltonjacobi}, both standard methods of approximation for Hamilton-Jacobi equations, demonstrating how it could be effectively coupled in the same fashion for applications in the simulation of fire and flames.

The main drawback of the numerical method presented in \cite{osher1988curvature} is: (1) a CFL condition given the method's explicit nature, and (2) its limitation to regular grids. Although our method has longer runtime per time step, it offers more stability for larger range of time step sizes (see Fig. \ref{fig:cfl-violation}), and further, it can be implemented on \emph{curved} triangle mesh surfaces. We focus on the special case of flat domains for the comparison because the aforementioned methods are designed for that case.

In Figure \ref{fig:numerical-comaprison}, we show a comparison between our framework and the method in \cite{osher1988curvature} for the evolution of a front $u$ with propagation prescribed by $u_t-\|\nabla u\|_2=0$, that is, essentially the $G$-equation. Since this is the limiting case where $\varepsilon=0$, we add a small amount of viscosity $\mathcal{O}(10^{-6})$ to guarantee its solutions are continuous. This is a simple regularization technique for problems without continuous solutions first introduced by \citet{sethian1985fronts}. We apply our framework to the regularized equation $u_t-\|\nabla u\|_2=\varepsilon\Delta u$ and show numerically that the dissipation created by adding viscosity performs the same as the reference method in \cite{osher1988curvature}. We observe convergence to the same steady state and very small error at each time step of the front propagation. This comparison suggests that  our method could be used as more stable component in larger simulation pipelines.

\section{Discussion and Conclusion}

PDE appear everywhere in geometry processing, and second-order parabolic PDE are no exception. While there are many tools to handle simpler PDE, such as the heat equation, more general parabolic PDE are a challenge to standard time integration methods and discretization schemes. Our work establishes an effective time integration and spatio-discretization strategy to solve this class of PDE under mild assumptions on triangle mesh surfaces. In addition to substantial theoretical work, we showed several numerical experiments indicating the validity of our method. We have also demonstrated how our method can be used as a numerical solver component in graphics applications. In particular, we showed how our method overcomes limitations in optimal transport tasks over geometric domains and in numerical integration schemes used for larger simulation pipelines.

\begin{figure*}
  \centering
  \includegraphics[width=\linewidth]{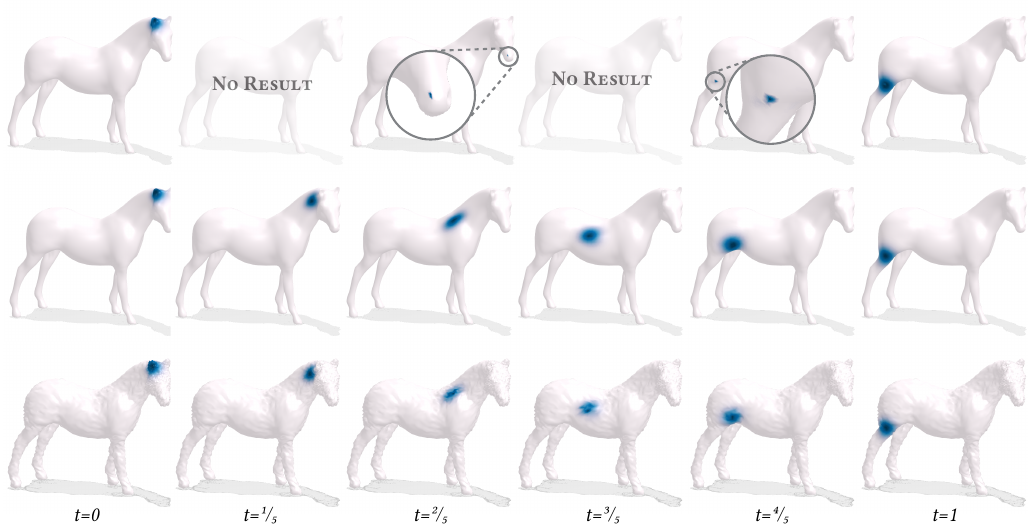}
  \caption{Wasserstein distance interpolation on a mesh with $76{,}736$ triangle faces and $38{,}370$ vertices. Top row: using the convolutional method proposed by \citet{solomon2015convolutional}. Middle row: using our logarithmic diffusion method. Bottow row: using our logarithmic diffusion method on a mesh with added uniformly random noise along normal directions. As seen on the top row, the convolutional method fails to obtain results for a small amount of entropy $\gamma=10^{-5}$.}
  \label{fig:dist-interp-horse}
\end{figure*}

While we have explored several practical implications of our framework, a number of interesting avenues for future work remain. In particular, for the Fokker-Planck equation, further experimental work includes extending results to other versions of this equation (e.g., nonlinear) and using the solutions obtained via our method together with the relationship to \eqref{eq:sde} to simulate Brownian motion on triangular surface meshes. Our work provides a meaningful first-step in this direction. On the practical side, an immediate avenue for future work is to derive an optimization algorithm to solve the problem in \eqref{eq:optprob} along the lines of \citet{edelstein2023convex} and compare it to our current off-the-shelf solution using CVX. On the theoretical front, we would like to explore an extension of our results to higher-order parabolic equations with Laplacian terms, such as the \textit{Kuramoto–Sivashinsky} equation.  Within the realm of second-order parabolic PDE of the form \eqref{eq:pdeform}, we are hopeful that our framework would provide a means to solve systems of coupled equations.

\begin{figure}
  \centering
  \includegraphics[width=\linewidth]{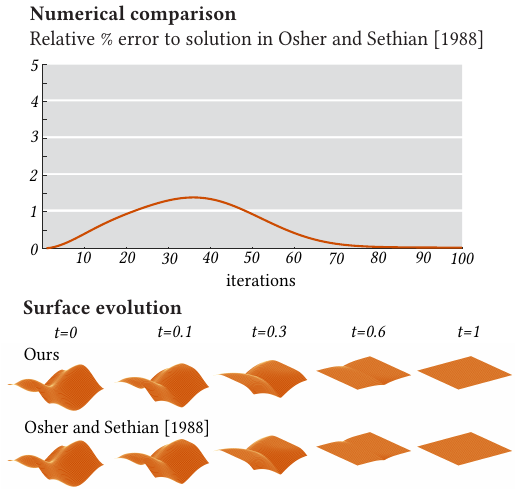}
  \caption{Surface moving with speed $\|\nabla u\|_2$. Here, the numerical comparison is done on a regular $50\times50$ regular grid using centered differences and periodic conditions; the time step size is $h=0.01$.}
  \label{fig:numerical-comaprison}
\end{figure}

\begin{acks}

The authors would like to thank Nestor Guillen for his thoughtful insights and feedback, as well as Paul Zhang and Yu Wang for proofreading.

Leticia Mattos Da Silva acknowledges the generous support of the Schwarzman College of Computing Fellowship funded by Google Inc. and the MathWorks Fellowship. Oded Stein was supported by the Swiss National Science Foundation’s Early Postdoc.Mobility Fellowship. Justin Solomon acknowledges the generous support of Army Research Office grants W911NF2010168 and W911NF2110293, of Air Force Office of Scientific Research award FA9550-19-1-031, of National Science Foundation grant CHS-1955697, from the CSAIL Systems that Learn program, from the MIT–IBM Watson AI Laboratory, from the Toyota–CSAIL Joint Research Center, from a gift from Adobe Systems, and from a Google Research Scholar award.

\end{acks}

\begin{figure}
  \centering
  \includegraphics[width=\linewidth]{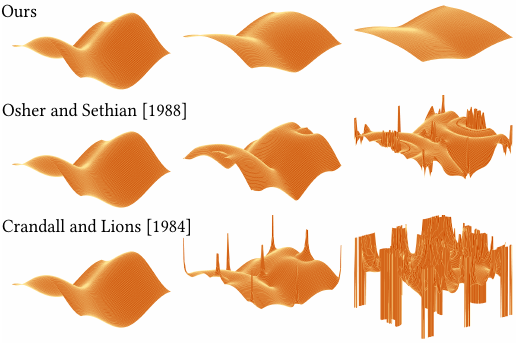}
  \caption{Standard methods used in larger graphics pipelines for simulation of fire and flames become numerically unstable for time step and spatial discretization size outside a certain CFL condition, while our method remains stable. Here, the same comparison in Fig. \ref{fig:numerical-comaprison} is done, now on a regular $100\times 100$ grid and time step size $h=0.1$.}
  \label{fig:cfl-violation}
  \vspace{-0.2in}
\end{figure}

\bibliographystyle{ACM-Reference-Format}
\bibliography{parabolic-pdes}


\appendix

\newpage
 
\section{Comparison Principle (Theorem \ref{thm:comparison})} \label{sec:comparison}

In this section, we provide a sketch of the proof for the comparison principle. The formal proof can be found in  \S5 of \cite{barles2013viscosity}, Theorem $5.2$. 

First, we note that by adding an appropriate multiple of $t$ to $w$, we may assume that $w$ is a strict subsolution. In particular, there exists a constant $\eta>0$ such that
\begin{equation}
    \frac{\partial w}{\partial t} + H(x,q,w) \leq -\eta<0. \label{eq:sub}
\end{equation}
Since $u$ is a supersolution, we also have
\begin{equation}
    \frac{\partial u}{\partial t} + H(x,q,u) \geq 0. \label{eq:sup}
\end{equation}
The key idea in this proof is to obtain a contradiction by assuming that the maximum value of $w-u$ is positive, i.e., we suppose $M=\max\{w-u\}> 0$. Now consider the test function 
\begin{equation*}
    \psi_{\beta,\alpha}(x,t,y,s) = w(x,t) - u(y,s) - \frac{1}{2\beta^2}\|x-y\|_2^2  - \frac{1}{2\alpha^2}\|t-s\|_2^2,
\end{equation*}
where $\beta,\alpha$ are some small positive constants. 

Suppose $(\bar{x},\bar{t},\bar{y},\bar{s})$ is a maximum argument of $\psi_{\beta,\alpha}$ and denote its maximum value by $\bar{M}$. Under the assumption that $\bar{M}>0$, we will apply Lemma $5.2$ in \cite{barles2013viscosity}, which says that in the infinitesimal limit we have $(\bar{x},\bar{t})\to(\bar{y},\bar{s})$ and $\bar{M}\to M$. In other words, for sufficiently small $\beta,\alpha$, the test function is $\psi_{\beta,\alpha}$ is a close approximation of $w-u$. By writing \eqref{eq:sub}--\eqref{eq:sup} in terms of the test function $\psi_{\beta,\alpha}$ and combining the inequalities, one can deduce that for sufficiently small $\beta,\alpha$, we have
\begin{align*}
    -\eta&\geq H\left(\bar{x},\frac{\bar{x} - \bar{y}}{\beta^2},w(\bar{x},\bar{t})\right) - H\left(\bar{y},\frac{\bar{x} - \bar{y}}{\beta^2},u(\bar{y},\bar{s})\right)\\[2pt]
    &\geq H\left(\bar{x},\frac{\bar{x} - \bar{y}}{\beta^2},w(\bar{x},\bar{t})\right) - H\left(\bar{x},\frac{\bar{x} - \bar{y}}{\beta^2},u(\bar{y},\bar{s})\right)\\
    &\quad\quad- L\left(1+ \left\|\frac{\bar{x} - \bar{y}}{\beta^2}\right\|_2\right)\|\bar{x} - \bar{y}\|_2\\[2pt]
    &\geq c(w(\bar{x},\bar{t}) - u(\bar{y},\bar{s})) - L\left(1+ \left\|\frac{\bar{x} - \bar{y}}{\beta^2}\right\|_2\right)\|\bar{x} - \bar{y}\|_2
\end{align*}
where we have used both assumptions (A2) and (A3) on $H$. Taking $\beta,\alpha\to0$, the above implies $0> c(w(\bar{y},\bar{s}) - u(\bar{y},\bar{s}))$, a contradiction.

\section{Existence and Uniqueness (Theorem \ref{thm:uniqueness})} \label{sec:uniqueness}

We now present a sketch of the proof for the existence of a unique viscosity solution to $\nicefrac{\partial u}{\partial t}+H(x,q,u)=0$. The formal proof can be found in \S7 of \cite{barles2013viscosity}, Theorem $7.1$. 

Let $\mathcal{S}$ denote the set of all subsolutions, and let $u$ be the pointwise supremum of $\mathcal{S}.$ Then $u$ need not be continuous, or even semi-continuous, but we can take its lower-semicontinuous and upper-semicontinous envelopes $u_\ast$ and $u^\ast$, respectively, which satisfy the inequality $u_\ast\leq u \leq u^\ast$. The main idea of this proof is is to establish the reverse inequality $u^\ast\leq u_\ast$, which would imply $u^\ast= u = u_\ast$. To do this it suffices to prove the following:
\begin{lemma} \label{upperenv}
    The upper envelope $u^*$ is a viscosity subsolution. 
\end{lemma}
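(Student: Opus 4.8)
The plan is to exploit the standard Perron-method machinery: we want to show that if the upper envelope $u^*$ of the family $\mathcal{S}$ of subsolutions were \emph{not} a subsolution at some point, we could perturb it locally to produce a strictly larger subsolution, contradicting maximality of the pointwise supremum $u$. First I would recall the alternative characterization of viscosity subsolutions in terms of the \emph{superdifferential}: $u^*$ is a subsolution iff for every $(x_0,t_0)$ and every $(p,\lambda)$ in the parabolic superdifferential of $u^*$ at $(x_0,t_0)$ we have $\lambda + H(x_0,p,u^*(x_0,t_0)) \le 0$. (On the manifold $\M$ one works in a coordinate chart around $x_0$, using that the notions of sub/supersolution are local and chart-independent; this is exactly the extension to manifolds invoked via \cite{peng2008maximum}.) Using upper-semicontinuity of $u^*$ and continuity of $H$ (assumption (A1)), it is a routine fact that the sub/supersolution inequalities pass to the upper-semicontinuous envelope \emph{provided} $u^*$ is itself attained as a limit of values of subsolutions — which it is, by construction of the envelope.

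The core argument is the contradiction step. Suppose $u^*$ fails the subsolution inequality: there is $(x_0,t_0)$ and a $C^1$ test function $\varphi$ touching $u^*$ from above at $(x_0,t_0)$ with $\partial_t\varphi(x_0,t_0) + H(x_0,\nabla\varphi(x_0,t_0),\varphi(x_0,t_0)) =: \theta > 0$. By continuity of $H$ and of the derivatives of $\varphi$, and using (A2) (monotonicity in $u$, which controls how $H$ changes when we decrease the value of the test function slightly), there is a small radius $r>0$ and a small $\delta>0$ such that the shifted function $\varphi_\delta := \varphi - \delta$ still satisfies $\partial_t\varphi_\delta + H(x,\nabla\varphi_\delta,\varphi_\delta) \ge \tfrac{\theta}{2} > 0$ on the ball $B_r(x_0,t_0)$, i.e.\ $\varphi_\delta$ is a strict \emph{super}solution there. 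Since $\varphi$ touches $u^*$ strictly from above away from $(x_0,t_0)$ (we may assume this by adding a quadratic bump, a standard normalization), and since $u^*$ is the upper envelope of $\mathcal S$, on the compact annulus $\{r/2 \le \mathrm{dist} \le r\}$ we have $\varphi_\delta > u^* \ge w$ for every $w\in\mathcal S$, while near $(x_0,t_0)$ we have $\varphi_\delta < u^*$ somewhere (because some subsolution comes within $\delta/2$ of $u^*(x_0,t_0) = \varphi(x_0,t_0)$). Then the function
\begin{equation*}
    \tilde u := \begin{cases} \max\{u^*,\varphi_\delta\} & \text{on } B_{r/2}(x_0,t_0),\\ u^* & \text{elsewhere},\end{cases}
\end{equation*}
is well-defined and is a viscosity subsolution (the maximum of two subsolutions is a subsolution, and the two definitions agree on the overlap annulus where $u^*$ wins), yet $\tilde u(x_0,t_0) > u^*(x_0,t_0) \ge u(x_0,t_0)$, contradicting the definition of $u$ as the pointwise supremum of all subsolutions. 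Hence no such bad point exists and $u^*$ is a subsolution.

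The main obstacle I expect is the gluing/localization step done carefully on a Riemannian surface rather than on $\R^N$: one must check that ``maximum of two subsolutions is a subsolution'' and the bump-function normalization transfer verbatim to $\M$, and that the constants $r,\delta$ can be chosen uniformly enough for the annulus comparison to work. This is precisely where one leans on \cite{barles2013viscosity} for the Euclidean statement and on \cite{peng2008maximum} for the manifold extension; the role of assumptions (A1)--(A2) is to make the perturbation $\varphi_\delta$ remain a strict supersolution after the downward shift, and (A3) is not needed for this lemma (it enters the comparison principle). Once Lemma \ref{upperenv} is in hand, the symmetric fact that the lower envelope of the family of \emph{super}solutions is a supersolution, combined with the comparison principle (Theorem \ref{thm:comparison}) applied to $u^*$ and $u_*$, forces $u^* \le u_*$, completing the existence proof sketched in this appendix.
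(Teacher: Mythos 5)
Your proposal has a structural error: the contradiction/bump construction you describe is the proof of Lemma~\ref{lowerenv} (the lower envelope is a \emph{super}solution), not of Lemma~\ref{upperenv}, and it does not transfer to the upper-envelope statement. Concretely, if $\varphi$ touches $u^*$ from above at $(x_0,t_0)$ and the subsolution inequality fails there, then $\partial_t\varphi + H(x_0,\nabla\varphi,\varphi) = \theta > 0$ means $\varphi$ is locally a strict \emph{super}solution. Your perturbation $\varphi_\delta = \varphi - \delta$ then satisfies $\varphi_\delta(x_0,t_0) = u^*(x_0,t_0) - \delta < u^*(x_0,t_0)$, so $\tilde u = \max\{u^*,\varphi_\delta\}$ equals $u^*$ at $(x_0,t_0)$; the claimed strict inequality $\tilde u(x_0,t_0) > u^*(x_0,t_0)$ is false, and no contradiction with the maximality of $u$ is produced. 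Worse, you justify that $\tilde u$ is a subsolution by invoking ``the maximum of two subsolutions is a subsolution,'' but $\varphi_\delta$ is by your own construction a strict supersolution, so that lemma does not apply. The asymmetry is intrinsic: a larger competitor can only be manufactured from a local strict \emph{sub}solution sitting above the envelope, which is exactly the situation in the lower-envelope lemma (there the test function touches from below and $\partial_t\phi + H < 0$), and not the situation here.

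The intended argument for Lemma~\ref{upperenv} is direct, not by contradiction: one shows that the supremum of a family of subsolutions is itself a subsolution, and that this property survives passage to the upper-semicontinuous envelope. The paper's sketch does this in three steps (a pair of subsolutions, then a countable family, then an arbitrary family by reduction to the countable case); the standard alternative is the sequence-of-near-maximizers argument, in which one picks $w_n\in\mathcal S$ and points $(x_n,t_n)\to(x_0,t_0)$ with $w_n(x_n,t_n)\to u^*(x_0,t_0)$, perturbs the test function so that $w_n-\varphi$ attains a local maximum at nearby points, applies the subsolution inequality for each $w_n$ there, and passes to the limit using the continuity assumption (A1). I would suggest reworking your proof along either of these lines; your bump construction, essentially as written, belongs in the proof of Lemma~\ref{lowerenv}.
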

\begin{lemma}  \label{lowerenv}
    The lower envelope $u_*$ is a viscosity supersolution. 
\end{lemma}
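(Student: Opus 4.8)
The plan is to prove Lemma~\ref{lowerenv} by Perron's classical ``bump'' argument. First, let me record why this lemma closes the circle: since Lemma~\ref{upperenv} gives $u^\ast\in\mathcal S$ while $u=\sup\mathcal S$ dominates every element of $\mathcal S$, we get $u\ge u^\ast$, hence $u=u^\ast$; in particular $u$ is itself a subsolution and is the maximal element of $\mathcal S$. Granting Lemma~\ref{lowerenv}, the comparison principle (Theorem~\ref{thm:comparison}) applied to the subsolution $u^\ast$ and the supersolution $u_\ast$, which agree with $u_0$ at $t=0$, yields $u^\ast\le u_\ast$; combined with $u_\ast\le u\le u^\ast$ this forces $u^\ast=u=u_\ast$, a continuous viscosity solution, and uniqueness is then immediate from comparison. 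So everything reduces to the lemma.

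For the lemma itself I would argue by contradiction. Suppose $u_\ast$ is not a supersolution: then there are a $C^1$ test function $\varphi$ and an interior space--time point $z_0=(x_0,t_0)$ with $t_0>0$ such that $u_\ast-\varphi$ has a local minimum at $z_0$ while $\partial_t\varphi(z_0)+H(x_0,\nabla\varphi(z_0),\varphi(z_0))=:-2\eta<0$. Working in a local coordinate chart (the manifold-to-$\R^N$ reduction being justified as in \cite{peng2008maximum}), I would apply the usual normalizations: add a constant so that $u_\ast(z_0)=\varphi(z_0)$, and subtract $\|z-z_0\|_2^4$ so that the minimum is strict on a small closed ball $\overline{B_r(z_0)}$ without changing $\varphi$, $\nabla\varphi$, $\partial_t\varphi$ at $z_0$. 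By the $C^1$-regularity of $\varphi$ and continuity of $H$ (A1), shrinking $r$ makes $\partial_t\varphi+H(x,\nabla\varphi,\varphi)\le-\eta$ throughout $B_r(z_0)$; uniform continuity of $H$ in its last slot over the relevant compact set then produces $\delta_0>0$ with $\partial_t(\varphi+\delta)+H(x,\nabla(\varphi+\delta),\varphi+\delta)\le-\eta/2<0$ on $B_r(z_0)$ for every $\delta\in(0,\delta_0)$, so each $\varphi+\delta$ is a classical, hence viscosity, subsolution there.

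Next comes the gluing. Strictness of the minimum gives $m>0$ with $u\ge u_\ast\ge\varphi+m$ on $\partial B_r(z_0)$; fix $\delta\in(0,\min(\delta_0,m))$ and define $U:=\max(u,\varphi+\delta)$ on $B_r(z_0)$ and $U:=u$ elsewhere. Near $\partial B_r(z_0)$ one has $u>\varphi+\delta$, so $U\equiv u$ in a neighborhood of the interface and the pieces glue; $U$ is upper semicontinuous, and by locality of the subsolution property together with stability of viscosity subsolutions under pointwise maxima, $U$ is a viscosity subsolution on the whole space--time domain; since $B_r(z_0)$ avoids $\{t=0\}$ and $\partial\M$, $U$ inherits the Cauchy data of $u$, so $U\in\mathcal S$. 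Finally, $u_\ast$ being the lower-semicontinuous envelope of $u$ yields a sequence $z_k\to z_0$ with $u(z_k)\to u_\ast(z_0)=\varphi(z_0)<\varphi(z_0)+\delta$, whence $U(z_k)\ge\varphi(z_k)+\delta>u(z_k)$ for large $k$ --- contradicting $u=\sup\mathcal S$. Hence $u_\ast$ is a supersolution.

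The hard part will be the rigorous verification that $U$ is genuinely a viscosity subsolution everywhere --- in particular across the interface $\partial B_r(z_0)$ and in the upper-semicontinuous-envelope sense in which $\mathcal S$ is defined --- which relies on the locality of the notion and on the standard (but not entirely trivial) stability lemma that a pointwise maximum of viscosity subsolutions is again one. A secondary technical point is keeping $U$ an admissible competitor, which I handle by choosing $z_0$ strictly interior in space with $t_0>0$ so that the bump never disturbs the prescribed initial or boundary data (boundary points of $\M$ needing the analogous argument with one-sided test functions). The remaining steps --- the normalizations, propagating a strict inequality by continuity, absorbing the shift $\delta$ via (A1), and the chart-wise reduction following \cite{barles2013viscosity} and \cite{peng2008maximum} --- are routine.
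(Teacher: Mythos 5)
Your proposal is correct and follows essentially the same Perron ``bump'' argument as the paper's proof: negate the supersolution property at a point, make the minimum strict via a quartic penalty, lift the test function by a small constant to get a local subsolution, glue it to $u$ by taking a pointwise maximum, and contradict the maximality of $u=\sup\mathcal S$. Your write-up is somewhat more detailed on the normalizations and the gluing, but the route is the same (and your closing use of the comparison principle to conclude $u^\ast\le u_\ast$ matches the paper as well).
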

To obtain Lemma \ref{upperenv}, the key insight is that the supremum of a set of subsolutions is itself a subsolution. This can be shown in three steps: first, one can show it for a pair of subsolutions, second for a countable set of subsolutions, and, finally, for an arbitrary set of subsolutions by reduction to the countable case.

As for Lemma \ref{lowerenv}, we obtain it by contradiction. Suppose $u_*$ is not a viscosity supersolution, then without loss of generality there exists a function $\phi$ such that $u_*-\phi$ is minimized with value zero at a point $(x,t)=(x_0,t_0)$ such that $\nicefrac{\partial \phi(x_0,t_0)}{\partial t}+H(x_0,\nabla \phi(x_0,t_0),u(x_0,t_0))=0$. One can then verify that for $\zeta>0$ small enough, the function $\max\{ u(x,t), \phi(x,t) + \zeta$ $-\|x-x_0\|_2^4-\|t-t_0\|_2^4\}$ is itself a subsolution that is greater than $u$ on a neighborhood of $(x_0,t_0)$. This contradicts the definition of $u$ as the pointwise supremum of $\mathcal{S}$. Lipschitz continuity is used to ensure that this argument extends to the time boundary $t=0$. It then follows from the comparison principle that $u^*\leq u_*$ as they share the same initial conditions.

\end{document}